
\documentclass[final,hidelinks,onefignum,onetabnum]{siamart250211}


\usepackage{lipsum}
\usepackage{amsfonts}
\usepackage{graphicx}
\usepackage{epstopdf}
\usepackage{algorithmic}
\usepackage{subcaption}

\ifpdf
  \DeclareGraphicsExtensions{.eps,.pdf,.png,.jpg}
\else
  \DeclareGraphicsExtensions{.eps}
\fi


\newsiamremark{remark}{Remark}
\newsiamremark{hypothesis}{Hypothesis}
\crefname{hypothesis}{Hypothesis}{Hypotheses}
\newsiamthm{claim}{Claim}
\newsiamremark{fact}{Fact}
\crefname{fact}{Fact}{Facts}
\newcommand{\average}[1]{\left\langle#1\right\rangle}
\def\BE{\mathrm{BE}}
\def\eps{\varepsilon}

\def\l{\langle}
\def\r{\rangle}
\def\p{\partial}
\def\d{\mathrm{d}}

\def\eps{\varepsilon}

\def\exp{\mathrm{exp}}

\headers{Reconstruct relaxation time}{P. Chen, I. M. Gamba, Q. Li and L. Wang}

\title{Reconstruction of heat relaxation index in phonon transport equation\thanks{Submitted to the editors \today.}}

\author{Peiyi Chen\thanks{Department of Mathematics, University of Wisconsin-Madison, Madison, WI 53706 USA 
  (\email{pchen345@wisc.edu}).}
\and Irene M. Gamba\thanks{Department of Mathematics and Oden Institute, University of Texas-Austin, Austin, TX 78712 USA
  (\email{gamba@math.utexas.edu}).}
\and Qin Li\thanks{Department of Mathematics, University of Wisconsin-Madison, Madison, WI 53706 USA 
  (\email{qinli@math.wisc.edu}).}
\and Li Wang\thanks{School of Mathematics, University of Minnesota, Minneapolis, MN 55455 USA 
  (\email{liwang@umn.edu}).}}

\usepackage{amsopn}





\begin{document}

\maketitle

\begin{abstract}
For nano-materials, heat conductivity is an ill-defined concept. This classical concept assumes the validity of Fourier's law, which states the heat flux is proportional to temperature gradient, with heat conductivity used to denote this ratio. However, this macroscopic constitutive relation breaks down at nano-scales. Instead, heat is propagated using phonon transport equation, an ab initio model derived from the first principle. In this equation, a material's thermal property is coded in a coefficient termed the relaxation time ($\tau$). We study an inverse problem in this paper, by using material's temperature response upon heat injection to infer the relaxation time. This inverse problem is formulated in a PDE-constrained optimization, and numerically solved by Stochastic Gradient Descent (SGD) method and its variants. In the execution of SGD, Fr\'echet derivative is computed and Lipschitz continuity is proved. This approach, in comparison to the earlier studies, honors the nano-structure of  of heat conductivity in a nano-material, and we numerically verify the break down of the Fourier's law. 
\end{abstract}

\begin{keywords}
Inverse Problems, Phonon Transport Equation, Stochastic Gradient Descent
\end{keywords}

\begin{MSCcodes}
35R30, 65M32
\end{MSCcodes}

\section{Introduction}
How does heat propagate through materials at nanoscale? Heat conduction is of utmost importance in material science, particularly with the emergence of nano-technology~\cite{CGM-2002, DCCG-2006-accum, P-nano-2010, Regner-2013-Nature Comm}. However, at this scale where the material size is comparable to the phonon mean free path, the classical macroscopic constitutive law of heat conductance, namely the Fourier's law, is no longer valid, as reported by previous experiments~\cite{HLCM-2019-PhyB, Regner-2013-Nature Comm}. A more fundamental model that honors ab initio derivation should be in place for understanding heat conductance at this scale. For nano-materials, heat typically propagates via phonon and heat conduction is encoded in phonon dynamics. To this end, we deploy the phonon transport equation (PTE), a set of equations derived in~\cite{DCCG-2006-accum}, and then extensively investigated in material science literature~\cite{Chen-2001-BD, CHAM-2014-PhyB, Majumdar-1993-heat conduct}. This set of equation is not completely independent from our traditional study. In particular, in the diffusive regime, this phonon transport equation for nano-material is reduced to the classical heat equation through asymptotic expansion.

Several coefficients in the phonon transport equation are related to the thermal properties of the materials. This includes the relaxation time coefficient $(\tau)$, mean free path (MFP, $\Lambda$) distribution and reflection coefficient $(\eta)$. Studying the materials' thermal properties amounts to determining these coefficients. These coefficients cannot be measured directly, so instead, experimentally, one injects heat to the materials under study and observes its temperature response -- the so-called ``pump-probe" technique. The collected data is then used to complete the reconstruction. These studies are of huge interest in material science, see~\cite{Minnich-2012-Suppression, Koh-Cahill-2007, Chen-2015-MFP}. For instance, time domain thermoreflectance (TDTR) and frequency domain thermoreflectance (FDTR)~\cite{Cahill-2004-TDTR, Regner-2013-Nature Comm} are two experimental techniques employed to extract effective thermal conductivity. 

Among many coefficients, in this paper, we pay special attention to the phonon relaxation time, a quantity that carries significant role in determining materials' thermal property~\cite{Hadji-rlx-16,Hadji-rlx-18}. Typically the relaxation time is the amount of time for a material to return to its original state upon disturbance. In this context, the phonon relaxation time is the time of phonon at different frequencies needs to get reduced to the equilibrium states, and thus $\tau = \tau(\omega)$ is a function of frequency $\omega$. Experiments are usually conducted in a non-intrusive manner, so physicists can only measure the temperature response of a given bulk of material at the surface of this material. Using the paired data of injected heat and temperature response, to infer the phonon relaxation time poses an inverse problem.

We study the whole set of the problem. More specifically, we first examine the model and identify its ballistic and diffusive regime in section~\ref{sec: BTE model}. The classical heat equation will be derived in the limit scaling of large space and long time (section~\ref{sec: conductance}). This study allows us to identify the regime in which our problem is valid. This problem of reconstruction of $\tau$ has been extensively studied in materials science, with various approaches proposed. Many of these methods are derived under strong assumptions, but we nevertheless provide a summary of these approaches in section~\ref{sec:other_models}. We finally formulate our problem into an inverse problem setting, and cast it into a PDE-constrained optimization, see section~\ref{sec: inverse and alg}. To solve this optimization problem, gradient based solvers are deployed, including both Armijo line search and Adaptive Gradient method. We discuss our choice of Stochastic Gradient Descent (SGD) and the convergence of these two methods. To compute the gradient, the standard adjoint state method is used. Our perspective honors the nano-structure, and thus reflects the materials' microscopic information. The numerical results are presented at the end in section~\ref{sec: numerics}.

Upon linearization of the model equation in section~\ref{sec: BTE model}, we define several macroscopic quantities from the solution of linearized PTE. Especially, we define a heat conductance from PTE solution and show that it matches the bulk heat conductivity in the limit of large-space long-time scaling. 
In section~\ref{sec: gradient computation}, the adjoint system is introduced and the according Fr\'echet derivative is derived in detail. 
Our approach is practical and focus on the numerical properties, and we provide some numerical study of forward problem in section~\ref{sec: numerical forward eqn}. The trajectories of characteristics can be clearly seen in our ballistic regime, which gives an insight on how to set up the experimental measurements in such regime. We also study the numerical property of the loss function in section~\ref{sec: numerical loss fcn}, leading to a numerical observation of the sensitivity of loss function with respect to $\tau$ with the chosen source, measurement pair.

It is worth mentioning that phonon transport equation belongs to a large class of equations under the umbrella of kinetic theory. Though inverse problem associated with phonon transport equation is still at its infancy, inverse kinetic theory has been extensively studied in other contexts, especially for the radiative transfer equations (RTE). Radiative transfer equation is the set of equation that describes photon (light) propagation. Experimentally, light is injected into unknown media, and by using information of injected and reflected light intensity, one can infer the optical properties of the media. This technique has been extensively studied in remote sensing, medical imaging and atmospheric science~\cite{Ar-med-1999, CHS-2020-optic tomo}. See reviews~\cite{Ar-med-1999, AS-Optomo-2009, GB-trans-2009, Ren-med imag-2010}. Mathematically, this inference problem is also formulated as an inverse problem, with its mathematical structure extensively studied in~\cite{BJ-stable, Bal-average-2011, SU-opt tomo-2003}. In particular, well-posedness (unique and stable reconstruction) was studied using the singular decomposition technique was first developed and then strengthened in~\cite{CS-singular-1996, SU-opt tomo-2003}. In a more general setting,~\cite{LL-reconstruct-2020} discussed the unique reconstruction for the source term, and the extended results using kinetic tools were studied in~\cite{LS-2020-tool, LS-2022-unique}. 

Another related area of study is PDE-constrained optimization, which is both a classical numerical tool for solving inverse problems arising from PDEs and a topic of extensive research. In this approach, the objective function is formulated as the discrepancy between PDE-simulated data and true measurements. Optimization is then used to find the configuration of coefficients that best match the simulated data to the given measurements. This technique is most commonly applied to elliptic and hyperbolic PDEs~\cite{AS-Optomo-2009, AS-gradient-1998}. A key tool in this computation is the adjoint state method, which is used to compute the Fréchet derivative of the loss function. The loss function is typically a functional that maps the unknown coefficient (usually a function) to the mismatch value, so when using a gradient-based solver, the Fréchet derivative must be derived with knowledge of the underlying PDE model~\cite{HPUU-book-2009}. Both gradient-based and Hessian-based methods have been employed to numerically solve PDE-constrained optimization problems~\cite{SAD-newton-1993, TKAK-RTE-2011}. More recently, stochastic optimization methods have gained popularity for large-scale problems~\cite{BCN-SGD-2018, CLL-stochastic-2018, Duchi-AdaGrad-2011, JZZ-SGD-2020}.

\section{Boltzmann model for heat conductance} \label{sec: BTE model}
In this section, we present the PDE model for heat-conductance, and related preliminary calculation in the diffusion regime. In particular, we introduce the model and the linearization in Section~\ref{sec:phonon_model} and discuss the computation of physical quantities, such as the heat flux, temperature and heat conductance in Section~\ref{sec: conductance}.

\subsection{Phonon transport equation}\label{sec:phonon_model}
As discussed in the introduction, heat propagation is modeled at the fundamental level by a 1D phonon transport equation, a type of Boltzmann Equation (BTE) of BGK (Bhatnagar-Gross-Krook) form:
\begin{equation}\label{eqn: f-BTE-1}
\partial_t f + \mu v(\omega) \p_x f = - \dfrac{f-f_{\BE}(\cdot\,;\mathsf{T})}{\tau(\omega)}\,.
\end{equation}
Here $f(t,x,\mu,\omega)$ stands for the probability of finding a phonon particle at time $t$, space $x$, with phonon frequency $\omega\in[0,\infty)$ and velocity angle $\mu = \cos(\theta)\in[-1,1]$. In other words, $f\d\mu \d\omega$ represents the number of phonons at time $t$ and location $x$. Two parameters are crucial in this equation: $v(\omega)$ is phonon group velocity and $\tau(\omega)$ is phonon relaxation time. They are determined by the material under study. The left hand side of the equation describes the phonon transport with velocity $v(\omega)\mu$, and the right hand side is the BGK operator, in charge of driving $f$ to $f_{\BE}(\cdot\,;\mathsf{T})$, the Bose-Einstein distribution. This  distribution takes the form of:
\begin{equation}\label{eqn: Bose-Einstein dist}
f_{\BE}(\cdot\,;\mathsf{T}) = [\exp{(\hbar\omega/k_B \mathsf{T})}-1]^{-1}\,,
\end{equation}
where the temperature $\mathsf{T}=\mathsf{T}(t,x)$ is determined by the conservation of phonon energy: 
\begin{equation}\label{eqn: T-conserve}
\int \hbar\omega D(\omega)\dfrac{f-f_{\BE}(\cdot\,;\mathsf{T})}{\tau(\omega)}\,\d\mu \d\omega =0\,.
\end{equation}
Here $D(\omega)$ is the phonon density of states and $\hbar\omega$ is the unit energy.

Throughout the paper, we use the notation:
\[
\langle f \rangle_{\mu,\omega} = \frac{1}{C} \int_0^{\infty}\int_{-1}^1 f\,\d\mu\d\omega \,, \quad \mathrm{where} \ C = \int_{0}^{\infty} \!\int_{-1}^{1} 1\, \d\mu \d \omega\,.
\]
Here $C$ serves as the normalization constant.

Most experiments are conducted close to the room temperature $T_0$, and it is typically sufficient to examine the equation linearized around $T_0$. To do so, we define the deviational energy distribution function:
\begin{equation*}
g(t,x,\mu,\omega) = \hbar\omega D(\omega)[f - f_{\BE}(\cdot\,;T_0)]\,.
\end{equation*}

Then \eqref{eqn: f-BTE-1} rewrites into:
\begin{equation}\label{eqn: g-BTE-2}
\p_t g + \mu v(\omega) \p_x g = - \dfrac{g - \hbar\omega D(\omega)(f_{\BE}(\cdot\,; \mathsf{T})-f_{\BE}(\cdot\,;T_0))}{\tau(\omega)}\,.
\end{equation}

Since the perturbation of temperature is small, we can linearize ~\eqref{eqn: f-BTE-1}, as outlined in \cite{HCRM-2017-PhyB}. In particular, denote $T$ as the temperature difference: $T = \mathsf{T}-T_0$, then the equilibrium can be simplified:
\begin{equation*}
f_{\BE}(\cdot\,; \mathsf{T}) \approx f_{\BE}(\cdot\,;T_0) + \left.\dfrac{\p f_{\BE}}{\p \mathsf{T}}\right\vert_{T_0} T\,\,.
\end{equation*}
Then~\eqref{eqn: g-BTE-2} is approximated by:
\begin{equation}\label{eqn: g-BTE-3}
\p_t g + \mu v(\omega) \p_x g = - \dfrac{g - g^*(\omega)T}{\tau(\omega)}\,,
\end{equation}
where $g^*(\omega)$ is the equilibrium for the linearized system:
\[
g^*(\omega):=\hbar \omega D(\omega)\left.\frac{\p f_{\BE}}{\p \mathsf{T}}\right\vert_{T_0}\,.
\]
To ensure the energy conservation in \eqref{eqn: g-BTE-3}, we require $T(t,x)$ to have the form: 
\begin{equation}\label{energy-conserv-temp}
\average{\frac{g(t,x,\mu,\omega)}{\tau(\omega)} - \frac{g^*(\omega)}{\tau(\omega)}T}_{\mu\,,\omega} = 0\,,\quad\text{i.e., }\quad 
T(t,x) = \frac{\l g/\tau\r_{\mu,\omega}}{\l g^*/\tau\r_{\mu,\omega}}\,.
\end{equation}
Since $T$ is determined by $T_0$ and $\mathsf{T}$, it seems no freedom to require $T$ to have this form. Nevertheless, from~\eqref{eqn: T-conserve}, we can derive~\eqref{energy-conserv-temp} if $g^*$ is from the exact difference between $f_{\BE}(\cdot;T)$ and $f_{\BE}(\cdot;T_0)$.

Several macroscopic quantities associated with this equation deserves our attention. In particular,
\begin{itemize}
    \item heat flux:
    \begin{equation}\label{eqn: flux}
    q(t,x) := \average{\mu v(\omega) g}_{\mu,\omega} \,;
    \end{equation}
    \item temperature (recall~\eqref{energy-conserv-temp}), we have:
    \begin{equation}\label{eqn: temperature}
    T(t,x) := \frac{\average{g/\tau}_{\mu,\omega}}{\average{g^*/\tau}_{\mu,\omega}}\,;
    \end{equation}
    \item and finally, inspired by the Fourier's law for the heat equation, we define heat conductance from the PTE solution~\eqref{eqn: g-BTE-3}:
    \begin{equation}\label{eqn: conductance}
    \kappa(t,x) := - \frac{q}{\p_x T} = - \frac{\average{\mu v g}_{\mu,\omega}}{\p_x \average{g/\tau}_{\mu,\omega}} \average{g^*/\tau}_{\mu,\omega}\,.
    \end{equation}
\end{itemize}

Equation~\eqref{eqn: g-BTE-3} is equipped with initial and boundary condition. To cope with the experimental setup,  we assume a Dirichlet incoming boundary condition at $x=0$ and a reflective boundary condition at $x=1$. Summarizing the system:
\begin{align} \label{eqn: g-BTE-IBVP} \left\{\begin{array}{ll}
\p_t g + \mu v(\omega) \p_x g = \dfrac{1}{\tau(\omega)}\mathcal{L}_0[g],&\ 0<x<1, \\[3mm]
g(t=0,x,\mu,\omega) = 0,\\[3mm]
g(t\not=0,x=0,\mu,\omega) = \phi(t, \mu,\omega),& \ \text{for}\ \mu>0, \\[3mm]
g(t,x=1,-\mu,\omega) = g(t,x=1,\mu,\omega), & \ \text{for}\ \mu>0\,.
\end{array}\right.
\end{align}
Here the collision operator on the right hand sides is:
\begin{equation*} 
\mathcal{L}_0[g](t,x,\mu,\omega) := T g^*(\omega) - g = \left(\frac{\l g/\tau\r_{\mu,\omega}}{\l g^*/\tau\r_{\omega}} g^* - g\right)\,,
\end{equation*}
and $\phi$ is the incoming source. In experiments, it is typical to choose $\phi$ to be periodic in time~\cite{Collins-2013, Regner-2013-Nature Comm}, and solution to the phonon transport equation~\eqref{eqn: g-BTE-2} has been written in Fourier variables in~\cite{HLCM-2019-PhyB}. In our paper, to visualize the mathematical structure of the solution, we assume $\phi$ taking  a form of a narrow Gaussian concentrated at $t_0, \mu_0$ and $\omega_0$ (to-be-tuned) with variances $\eps_i, i=1,2,3$, respectively. Note that the solution space is linear since the equation is linear, and therefore we have the freedom to choose different basis. In particular, we parameterize this space using basis functions of the form:
\begin{equation}\label{eqn:phi_cond}
\phi(t,\mu,\omega) = \phi_{\eps_1}(t-t_0) \phi_{\eps_2}(\mu-\mu_0)\phi_{\eps_3}(\omega-\omega_0)
\end{equation}
instead of the Fourier basis, and this won't introduce any inconsistency with the experiment.

For computational convenience, we define a new quantity:
\begin{equation} \label{eqn: def of h*}
h(t,x,\mu,\omega) := \frac{1}{\tau(\omega)}g(t,x,\mu,\omega)\,,\quad  h^*(\omega) := \frac{1}{\tau(\omega)}g^*(\omega)\,.
\end{equation}
Note that $h$ does not carry any physical meanings, but will make the calculation significantly easier. This quantity satisfies:
\begin{equation} \label{eqn: h-BTE-IBVP}
\left\{\begin{array}{ll}
\partial_t h + \mu v(\omega) \partial_x h = \dfrac{1}{\tau(\omega)} \mathcal{L}[h] \,, \quad & 0<x<1 \\[3mm]
h(t=0, x,\mu,\omega) = 0\,, \\[3mm]
h(t\not=0, x=0,\mu,\omega) = \dfrac{\phi}{\tau}\,, & \mu>0 \\[3mm]
h(t, x=1,-\mu,\omega) = h(t, x=1,\mu,\omega)\,, & \mu>0\,,
\end{array}\right.\,
\end{equation}
where the collision operator $\mathcal L $ takes the form
\begin{equation} \label{LL}
 \mathcal L [h] :=\frac{\l h \r_{\mu,\omega}}{\l h^* \r_{\omega}} h^* - h   \,.
\end{equation}

\begin{proposition} \label{propL}
Here we summarize the properties of $\mathcal L$ in \eqref{LL}.
\begin{enumerate}
\item $\mathcal{L}[h]$ is linear in $h$.
\item Conservation: 
\begin{equation*}
\average{\mathcal{L}[h]}_{\mu,\omega} = \frac{\l h \r_{\mu,\omega}}{\l h^* \r_{\omega}} \l h^* \r_{\omega} - \l h\r_{\mu,\omega} = 0.
\end{equation*}
\item $\mathcal L$ has a one-dimensional kernel:
\[
\text{Ker}\ \mathcal{L}=\{h(t, x, \mu, \omega) = C(t, x)h^*(\omega),\ \text{constant in}\ \mu.\}
\]
\item $\mathcal{L}$ is self-adjoint with weight $\frac{1}{h^*}$.
\[
\average{\mathcal{L}[h]\,, p\, \dfrac{1}{h^*}}_{\mu,\omega}=\average{h \,,\mathcal{L}[p]\, \dfrac{1}{h^*}}_{\mu,\omega}
\]
\item For any $g\in\text{Range}\ \mathcal{L}$, there is a unique $h\in(\text{Ker}\ \mathcal{L})^\perp$ so that $\mathcal{L}^{-1}[g]=h+\text{Ker}\ \mathcal{L}$. Here $(\text{Ker}\ \mathcal{L})^\perp$ is the $L_2$ perpendicular space weighted by $\frac{1}{h^*}$.
\end{enumerate}
\end{proposition}
All properties are straightforward. In particular, for showing $4$:
\begin{equation*}\begin{aligned}
\average{\mathcal{L}[h] p\, \dfrac{1}{h^*}}_{\mu,\omega} = \average{\left(\dfrac{\l h\r_{\mu,\omega}}{\l h^*\r_{\omega}} h^* - h\right) p\, \frac{1}{h^*} }_{\mu,\omega} = \int_{\mu,\omega} h \left(\dfrac{\l p\r_{\mu,\omega}}{\l h^*\r_{\mu,\omega}} h^* - p\right)\, \frac{1}{h^*} \,\d\omega \d\mu.
\end{aligned}\end{equation*}

\subsection{Parabolic limit in the diffusion regime} 
\label{sec: conductance}
When time and space are rescaled properly, the phonon transport equation presents diffusive features, and the equation asymptotically becomes the diffusion equation. On the macroscopic level, one can further deduce the classical Fourier's law, meaning the heat flux is a constant proportion of the temperature gradient. Mathematically, it means $\kappa$, as defined in~\eqref{eqn: conductance} becomes a constant.
\subsubsection{Diffusion equation derivation} 
It is typical that in~\eqref{eqn: g-BTE-3}, relaxation time of phonon is the order of picoseconds, i.e.  $10^{-12}-10^{-11} \mathrm{s}$, and mean free path is the order of nanometers, i.e. $10^{-9}-10^{-8} \mathrm{m}$.

Upon utilizing nondimensionalization, we introduce
\begin{equation*}
\hat{t} = \frac{t}{\delta}\,,\ \hat{x} = \frac{x}{\varepsilon}\,,
\end{equation*}
and in the rescaled variables, 
\begin{equation*}
\hat{g}(t,x,\mu,\omega) := g(\hat{t},\hat{x},\mu,\omega)=g\left(\frac{t}{\delta}, \frac{x}{\varepsilon}, \mu,\omega\right)\,.
\end{equation*}
If the experiment is conducted in nanoseconds and micrometers, $\delta=O(10^{-3}), \varepsilon=O(10^{-3})$ and roughly they are equal. We call this the quasi-ballistic regime. On the other hand, if the experimental observation is in megahertz (microseconds) and micrometers respectively, compatible with the TDTR experiments, $\delta=O(10^{-6})$, $\varepsilon=O(10^{-3})$, and roughly $\delta = \varepsilon^2$. We call this the diffusive regime.

In the diffusive regime where $\delta=\varepsilon^2$, $\hat{g}$ solves:
\begin{equation*}
\varepsilon^2 \partial_t \hat{g}_\varepsilon + \varepsilon \mu v(\omega) \partial_x \hat{g}_\varepsilon = \frac{1}{\tau(\omega)}\mathcal{L}_0[\hat{g}_\eps]\,.
\end{equation*}
We drop hats and equivalently, $g_\varepsilon$ solves:
\begin{equation}\label{eqn: g-diff-regime}
\p_t g_\eps + \frac{1}{\eps} \mu v \p_x g_\eps = \frac{1}{\eps^2} \dfrac{1}{\tau(\omega)}\mathcal{L}_0[g_\eps]\,.
\end{equation}

The following proposition characterizes the evolution of the macroscopic quantity when $\varepsilon\to0$, that is, in the diffusive regime:
\begin{proposition}\label{prop: limit eqn}
When $\eps\to 0$, the solution to \eqref{eqn: g-diff-regime} converges to the solution of heat equation in the sense that $\average{g_\eps}_{\mu,\omega} \to \average{g^*}_{\mu,\omega} u(t,x)$,
with $u(t,x)$, the temperature, solving:
\begin{equation} \label{eqn: heat}
\p_t u - \frac{\kappa}{\average{g^*}_{\mu,\omega}} \p_{xx}u = 0\,, \qquad\text{with}\quad
\kappa :=\frac{1}{3} \average{\tau v^2 g^*}_\omega\,,
\end{equation}
where $\kappa$ is the bulk heat conductivity. Furthermore, recall the definition of heat flux~\eqref{eqn: flux},
$\lim_{\eps\to0}q_\eps(t,x) = - \frac{1}{3}\average{v^2 \tau g^*}_\omega \p_x u $ and $\lim_{\eps\to0}
T_\eps(t,x)= u(t,x)$, making, according to~\eqref{eqn: conductance}:
\begin{equation}\label{eqn:conv_kappa}
\kappa_\varepsilon \rightarrow \kappa\,.
\end{equation}
\end{proposition}

We provide the proof in Supplementary Material~SM1. The original rigorous proof with validation of asymptotic expansion is in~\cite{BSS-diffusive-1984}. The significance of the proposition is immediate. It connects the unfamiliar phonon transport equation~\eqref{eqn: g-diff-regime} with the heat equation, a classical model. Typically, by energy conservation, one has $\partial_tQ = \partial_xq$ with $Q\propto T$ representing heat energy per unit volume and $q$ being the heat flux. Assuming the Fourier's law holds: $q=-\kappa \partial_x T$, we land with the same equation as~\eqref{eqn: heat}. This proposition suggests that this holds only in the $\eps\to0$ limit where both spatial and temporal scalings are large, and the heat equation stops being valid for large $\eps$.

Our simulation of phonon transport confirms the finding. We show in Fig~\ref{fig: ballistic vs. diffusive (3 graphs)} with $\eps=1$ and $\eps=0.1$ respectively using boundary and initial condition described in~\eqref{eqn:phi_cond}. As seen in Figure~\ref{fig: ballistic vs. diffusive (3 graphs)}(A)-(C) for $\eps=1$, the solution demonstrates strong ballistic behavior with clear characteristic lines. In this case, the heat conductivity is non uniform in the $(t,x)$ domain. In particular, along the characteristic, as shown in Figure~\ref{fig: ballistic vs. diffusive (3 graphs)}(C), the heat conductivity computed from~\eqref{eqn: conductance} is no longer a constant, but varies dramatically. The ray goes through a reflection at $x=1$ and generates a strong singularity at $t=t_r$, the arrival time of the ray. When $\eps=0.1$, the equation falls in the diffusion regime, as shown in Figure~\ref{fig: ballistic vs. diffusive (3 graphs)}(D)-(E) the solution diffuses out, and the heat conductivity becomes a constant quickly (around $t=0.125$), and settles with a value approximately $0.046$, and is the same value computed by~\eqref{eqn: heat}.

\begin{figure}[htbp]
     \centering
     \begin{subfigure}[b]{0.32\textwidth}
         \centering
         \includegraphics[width=\textwidth]{ 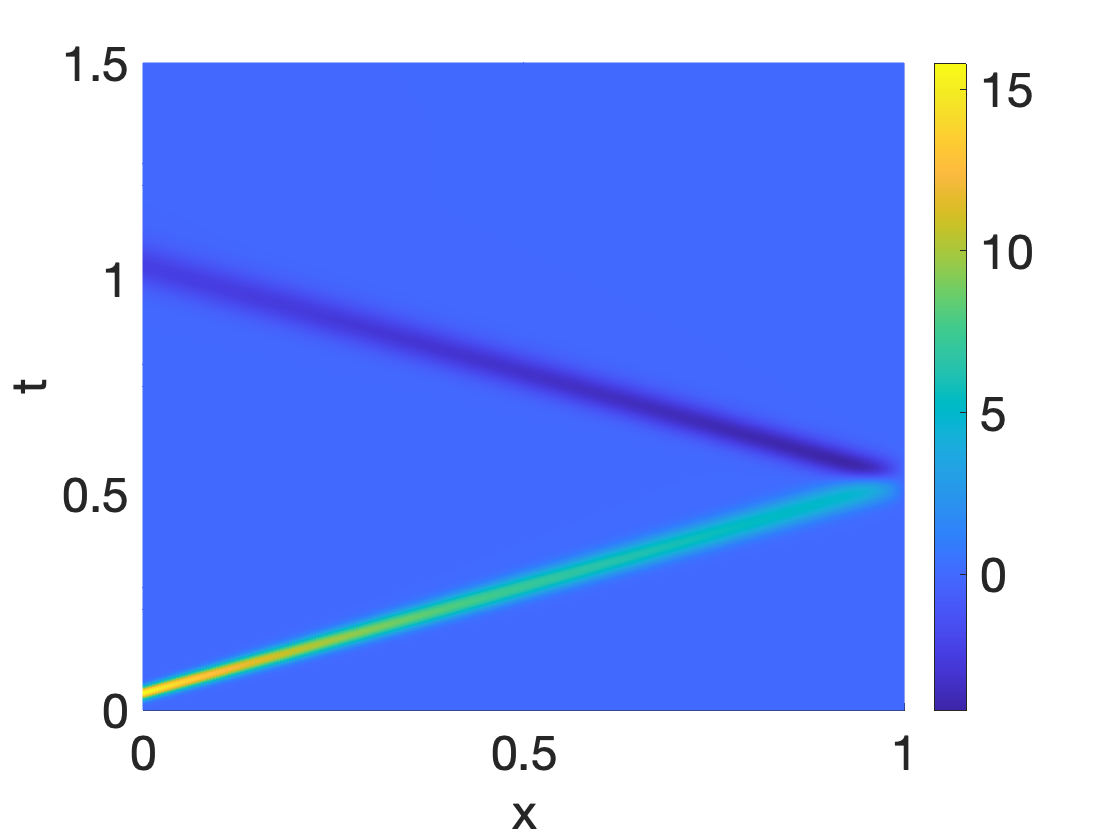}
         \caption{$q\,, \varepsilon=1$}
         \label{fig: flux-1}
     \end{subfigure}
     \hfill
     \begin{subfigure}[b]{0.32\textwidth}
         \centering
         \includegraphics[width=\textwidth]{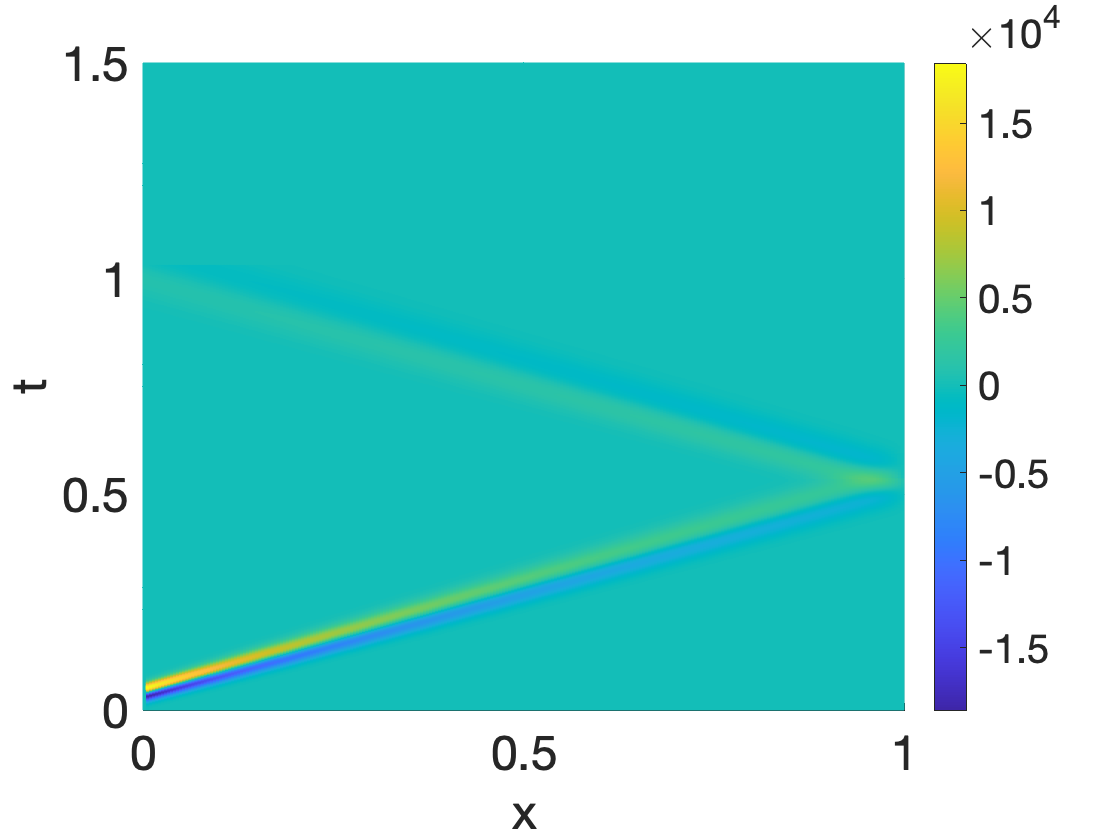}
         \caption{$\nabla_x T\,, \varepsilon=1$}
         \label{fig: dT-1}
     \end{subfigure}
     \hfill
     \begin{subfigure}[b]{0.32\textwidth}
         \centering
         \includegraphics[width=\textwidth]{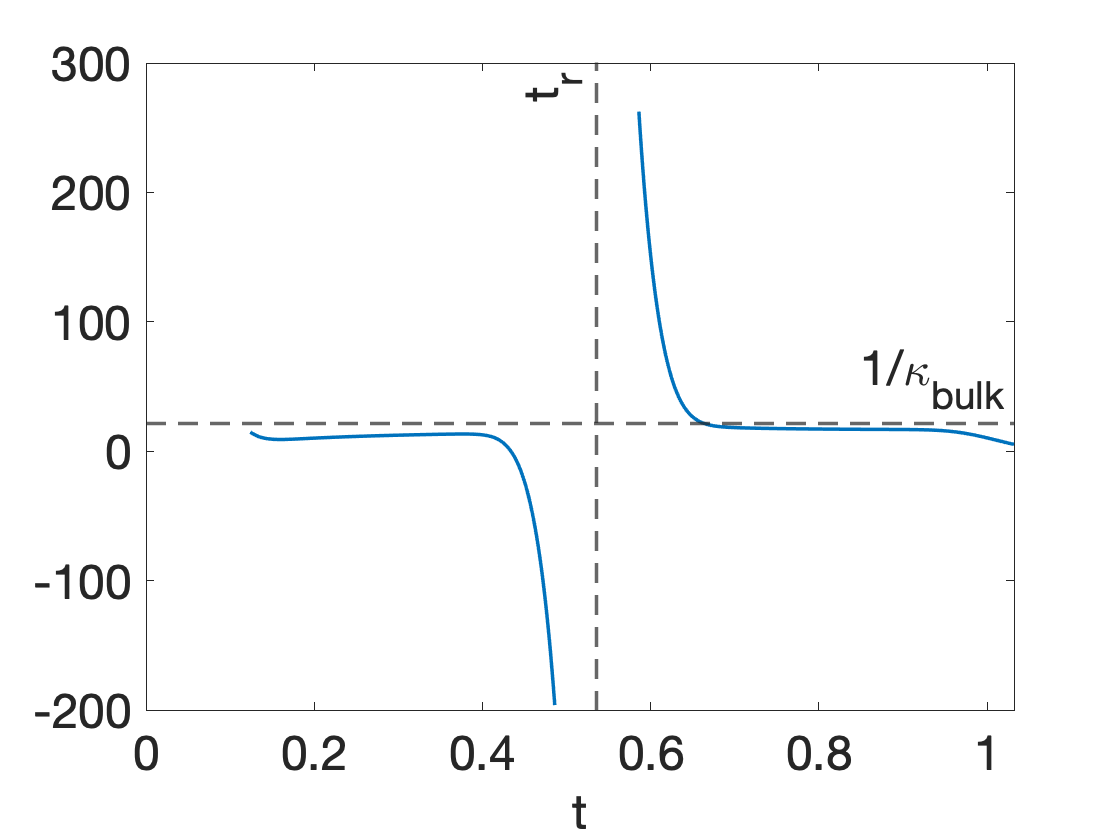}
         \caption{$1/\kappa\,, \varepsilon=1$}
         \label{fig: kappa-1}
     \end{subfigure}
     \\
     \begin{subfigure}[b]{0.32\textwidth}
         \centering
         \includegraphics[width=\textwidth]{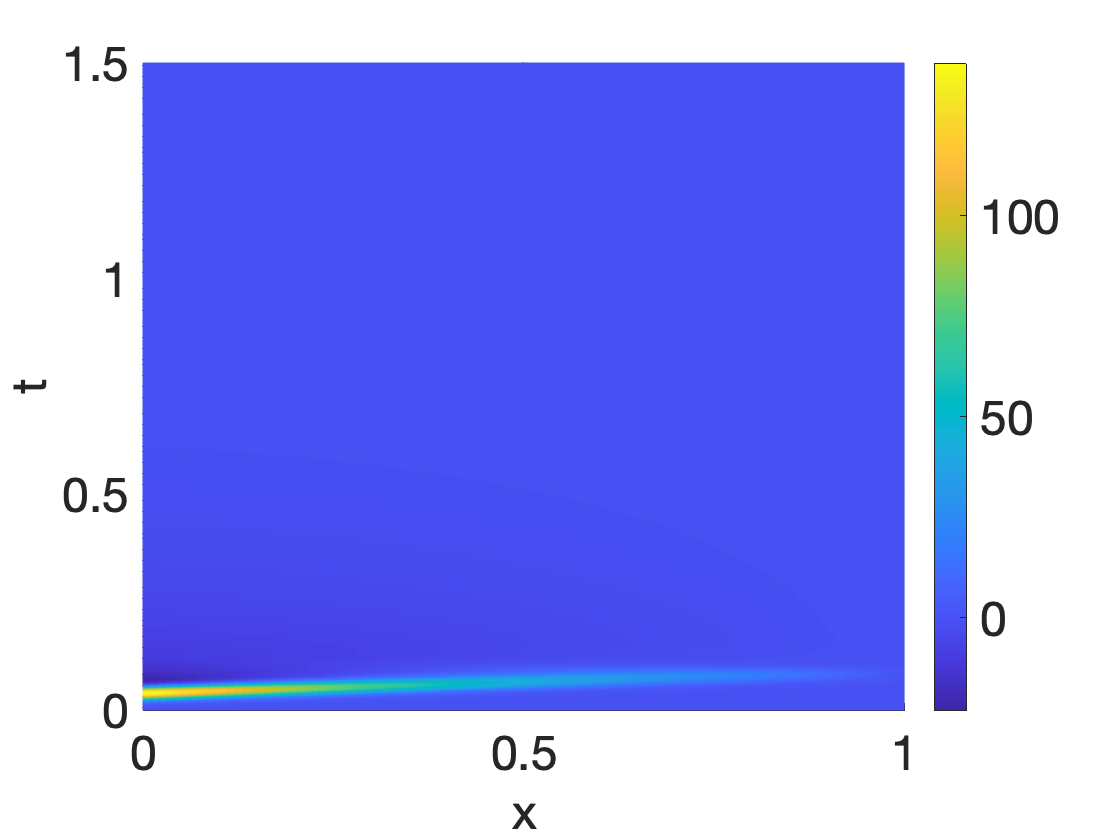}
         \caption{$q\,, \varepsilon=0.1$}
         \label{fig: flux-2}
     \end{subfigure}
     \hfill
     \begin{subfigure}[b]{0.32\textwidth}
         \centering
         \includegraphics[width=\textwidth]{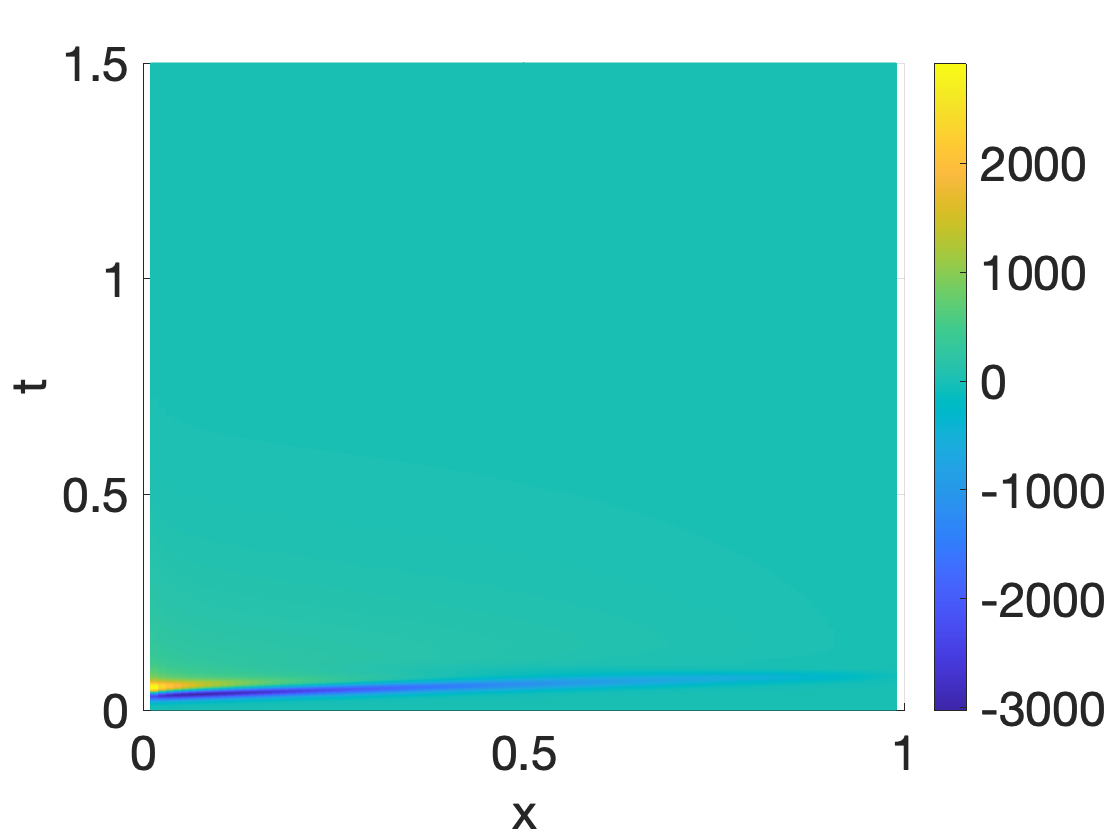}
         \caption{$\nabla_x T\,, \varepsilon=0.1$}
         \label{fig: dT-2}
     \end{subfigure}
     \hfill
     \begin{subfigure}[b]{0.32\textwidth}
         \centering
         \includegraphics[width=\textwidth]{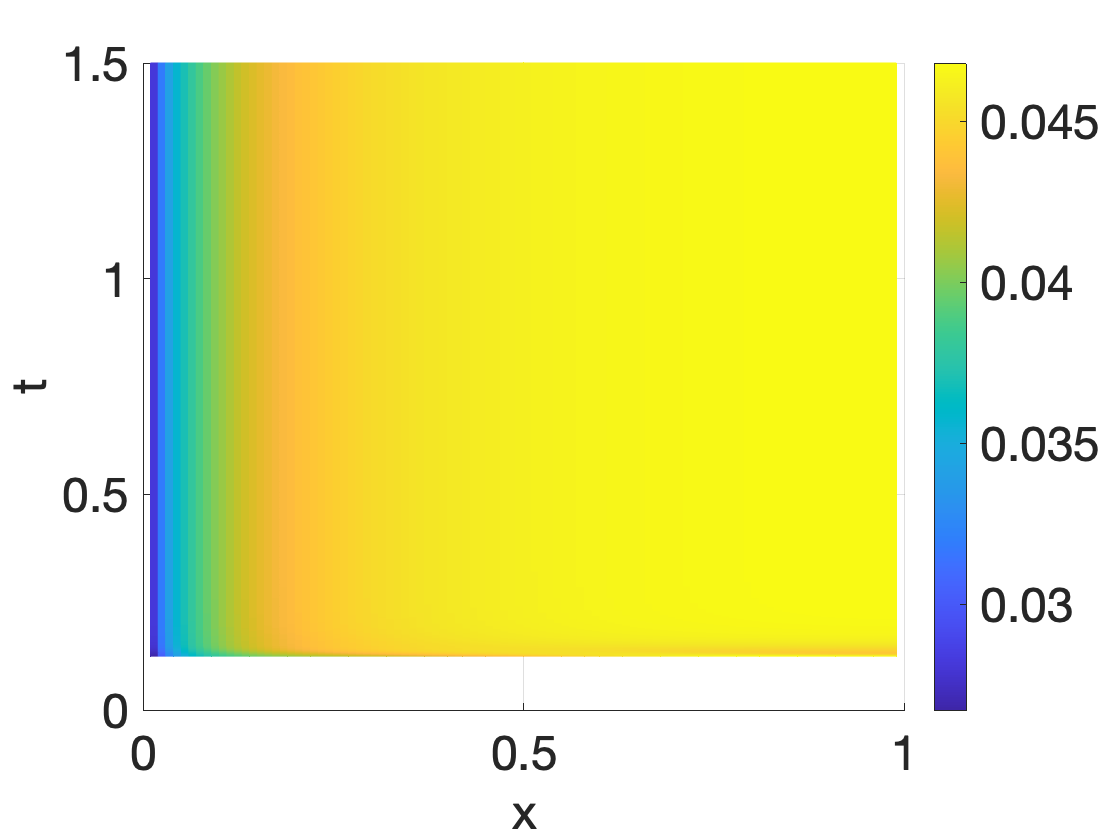}
         \caption{$\kappa\,, \varepsilon=0.1$}
         \label{fig: kappa-2}
     \end{subfigure}
        \caption{Comparison between quasi-ballistic regime and diffusive regime}
        \label{fig: ballistic vs. diffusive (3 graphs)}
\end{figure}

\section{Existing Models Relating Heat Conductance to Phonon Frequencies} \label{sec:other_models}
Since the heat equation no longer holds, and the Fourier's law breaks down when materials shrink in size, more refined models are needed to describe the nano-scale heat conductance.

However, simulating PTE directly is a complex task. Therefore, various intermediate models have been proposed to provide corrections to the definition of ``effective thermal conductivity" by exploring the more detailed relationship between thermal conductivity and phonon frequency. These models are simpler than PTE but sacrifice a portion of the accuracy inherent in PTE. We summarize these models below.

\vspace{0.1in}
\noindent\textbf{Grey approximation.}
The first step towards addressing the non-equilibrium behavior of the phonon distribution is by Debye, who used the grey approximation~\cite{Kittel-book-solid}. This is to assume that group velocity $v_G$ and relaxation time $\tau_G$ does not depend on phonon frequency $\omega$. As a result, the heat conductance $\kappa$ obtained in the Proposition~\ref{prop: limit eqn} simplifies to
\begin{equation} \label{eqn: grey}
\kappa = \int_0^\infty \dfrac{1}{3} g^*(\omega) v_G^2 \tau_G \,\d\omega = \dfrac{1}{3}C v_G^2 \tau_G\,, \quad\text{with}\quad C = \average{g^*(\omega)}_\omega\,.
\end{equation}
Here $C$ represents volumetric heat capacity.

It has been observed that assuming independence of group velocity and relaxation time on $\omega$ is a significant simplification. In certain experimental settings \cite{Regner-2013-Nature Comm}, it has been shown that such assumption does not hold, which renders the grey approximation invalid.

\vspace{0.1in}
\noindent \textbf{Accumulation formula.}
To emphasize the role of different phonon frequencies in heat conductance, authors in~\cite{DCCG-2006-accum} proposed a concept named ``the thermal conductivity accumulation function" defined as
\begin{equation} \label{eqn: acuumulation model}
\kappa_{\text{accum}}(\Lambda^*) = \int_{\omega_{\min}}^{\omega_{\max}} \dfrac{1}{3} C(\omega)v(\omega)^2\tau(\omega) \,\d\omega\,,
\end{equation}
where $C(\omega)$ is the volumetric specific heat capacity as a fuction of frequency $\omega$. It was then argued that only phonons within a certain frequency range (or mean free path) contribute to the accumulation of heat conductivity. This range was denoted as $(\omega_{\min}, \omega_{\max})$. 

In practice, \eqref{eqn: acuumulation model} is computed via integration with respect to the mean free path $\Lambda(\omega) := v(\omega)\tau(\omega)$, i.e., 
\begin{equation}
\kappa_{\text{accum}}(\Lambda^*) = \int_0^{\Lambda^*} \dfrac{1}{3} C(\Lambda)v(\Lambda)\Lambda \left(\frac{d\Lambda}{d\omega}\right)^{-1} \, \d\Lambda\,,
\end{equation}
where $\Lambda^*$ is the cut-off length and is subject to debate. In~\cite{Chen-2015-MFP}, the authors used the heater size, i.e. the width of metal lines on top of the substrate, as a reference for this cut-off. In~\cite{Koh-Cahill-2007}, the authors proposed to set the cut-off as the ``the penetration depth",  defined by $L_p := \sqrt{\kappa/C\pi f}$, where $f$, $\kappa$ and $C$ are heating frequency, reference heat conductivity and heat capacity respectively. This model was then deployed in experiment~\cite{Regner-2013-Nature Comm}, where, by  varying heating frequency $f$ and fitting the conductivity empirically, one can extract  the distribution of $\Lambda$.

\vspace{0.1in}
\noindent \textbf{Suppression model.} 
Instead of directly truncating the mean free paths (MFPs) as in Eq. \eqref{eqn: acuumulation model}, authors in \cite{Minnich-2012-Suppression} propose an alternative approach to determine the distribution function of the mean free path. Ideally, this function will naturally take a very small value when the mean free path falls in a region that does not contribute much to the thermal conductivity.

In particular, they introduce a heat flux suppression function $S(\Lambda)$, which represents the reduction in phonon heat flux relative to the prediction from Fourier's law.  Then, the effective thermal conductivity can be obtained by
\begin{equation}\label{eqn: suppression model}
\kappa(L) = \int_0^\infty S(\Lambda/L)f(\Lambda) \,\d\Lambda\,.
\end{equation}
Here, $L$ is a tunable characteristic thermal length in experiments, such as the grating wavelength in transient grating geometry. By varying this value, different values of $\kappa(L)$ can be measured. $f(\Lambda)$ stands for the phonon MFP density,  which is the interest of reconstruction. Since $S(\Lambda/L)$ is specified for some specific heating geometry, $f(\Lambda)$ can be inferred.

\vspace{0.1in}
All these models integrate more details to correct the heat equation. Furthermore, these formulas provide an easy pathway to infer phonon properties. More specifically, by tuning the size of the metal heater or the frequency of the pump beam in metal grating experiments~\cite{Chen-2015-MFP} or time-domain thermo-reflectance (TDTR) experiments~\cite{Cahill-2004-TDTR, Regner-2013-Nature Comm}, the thermal conductivity $\kappa$ in~\eqref{eqn: acuumulation model} or~\eqref{eqn: suppression model} can be measured. One can then use it to infer phonon relaxation time and the distribution of phonon mean free path using these formulas.

However, noting the similarity of the formula (\eqref{eqn: grey}, \eqref{eqn: acuumulation model} or \eqref{eqn: suppression model}) to~\eqref{eqn: heat}, the models nevertheless fundamentally rely on the Fourier law assumption and lack the full-scope information presented in PTE. We propose to revisit the reconstruction starting from the most fundamental model, devoid of any additional assumptions. The temperature response of the material is measured to implicitly infer the relaxation time $\tau$ through the PDE-constrained inversion. A similar approach was taken in~\cite{Hadji-rlx-16}, in which the authors deployed a MC-type Bayesian optimization solver to directly reconstruct $\tau$.

\section{Inverse problem setup and PDE-constrained optimization} \label{sec: inverse and alg}
Our goal is to recover phonon relaxation time $\tau$ from macroscopic temperature measurements. We present the setup, the associated PDE-constrained optimization problem, and the related algorithm. In the execution of the optimization, functional gradient (Fr\'echet derivative) is needed and we also provide the computation. Throughout the section, we set $\varepsilon=1$ and for the reconstruction, the formulation presented through $h$ gives the best simplicity~\eqref{eqn: h-BTE-IBVP}, and is thus the equation that we rely on.

\subsection{Inverse problem setup}\label{subsec: inverse setup}
It is typical that in the lab setting, only macroscopic quantities are measurable, and they have to be measured in a non-intrusive manner. In particular, one can adjust the input heat source $\phi$~\eqref{eqn:phi_cond} and measure the resulting temperature change $T(t)$ only at the surface $x=0$. Denoting $\psi(t)$ the window function for the measurement conducted in time, we can define the forward map:
\begin{equation*}\label{eqn: forward mapping}
\Lambda_{\tau}: (\phi,\psi) \mapsto \l T(x=0,t), \psi(t) \r_t\,,\quad\text{where}\quad T(t,x) = \frac{\average{h}_{\mu,\omega}}{\average{h^*}_{\mu,\omega}}\,.
\end{equation*}
It is typical to choose $\psi(t)$ as to concentrate on the measurement time $t_R$, with variance $\eps_4$:
\begin{equation}\label{eqn: psi_cond}
\psi(t) = \phi_{\eps_4}(t-t_R)\,.
\end{equation}

For a given $\tau(\omega)$, PDE is fully determined, and when equipped with boundary condition $\phi$, produces a unique solution and thus generates a unique evaluation of $\Lambda_{\tau}(\phi,\psi)$. The inverse problem is to utilize many pairs of $\Lambda_{\tau}(\phi_i,\psi_i)$ to reconstruct $\tau(\omega)$. It is important to note that though $\Lambda_\tau$ is a linear map of $\phi$ and $\psi$, it has a non-linear dependence in $\tau$, so the inverse problem in nonlinear.

Denote $\tau^\ast$ the groundtruth relaxation time. We look to reconstruct it using the available data:
\begin{equation}\label{eqn:real_data}
\mathrm{d}_i=\Lambda_{\tau^\ast}(\phi_i,\psi_i)\,, \quad i=1,\cdots,N\,.
\end{equation}

Numerically, a PDE-constrained optimization can be formulated to achieve the task. In particular, we look for the function $\tau(\omega)$ so that its PDE-simulated data $\Lambda_{\tau}(\phi_i,\psi_i)$ agree as much as possible with $\{\d_i\}$.

To be more specific, define the mismatch function
\begin{equation}\label{eqn: mismatch}
l_i := \Lambda_\tau(\phi_i,\psi_i) - \Lambda_{\tau^*}(\phi_i,\psi_i) = \left\l \dfrac{\average{h}_{x=0,\mu,\omega}}{\average{ h^*}_{\mu,\omega}}, \psi_i \right\r_t - \mathrm{d}_i\,,
\end{equation}
then the PDE constrained optimization problem is stated as follows:
\begin{equation}\label{eqn: PDE-constrained opt}
\begin{array}{ll}
\text{min}_{\tau(\omega)} & L[\tau] := \frac{1}{N} \sum_{i=1}^N L_i\,,\quad \text{with}\quad L_i =\frac{1}{2}  l_i^2\,, \\[3mm]
\text{s.t.}  & 
\left\{\begin{array}{ll}
\partial_t h + \mu v(\omega) \partial_x h = \dfrac{1}{\tau(\omega)} \mathcal{L}[h]\,, \quad & 0<x<1 \\[3mm]
h(t=0, x,\mu,\omega) = 0\,, \\[3mm]
h(t\not=0, x=0,\mu,\omega) = \dfrac{\phi_i}{\tau}\,, & \mu>0\,, \\[3mm]
h(t, x=1,-\mu,\omega) = h(t, x=1,\mu,\omega)\,, & \mu>0\,.
\end{array}\right.
\end{array}
\end{equation}
The loss function $L$ is the average of $N$ sub-loss functions, with $L_i$ measuring the mismatch between the PDE-generated solution and the $i$-th data point. Each $L_i$ is a functional of phonon relaxation time $\tau(\omega)$. It is our goal to find a $\hat{\tau}(\omega)$ that minimizes $L$ under the constraint from the PDE system. 

In practice, this optimization is achieved through either optimize-then-discretize (OTD) or discretize-then-optimize (DTO) approaches. Here, we choose the former approach, initially formulating the optimization algorithm at the continuous level, then approximating it numerically using PDE solvers.

\subsection{Optimization algorithm}\label{sec: SGD algs}
There are many choices of algorithms for implementing optimization. Broadly, one must decide between gradient-based solvers or Hessian-based solvers. In our scenario, representing $\tau(\omega)$ as a numerical vector involves very high-dimensional data. Consequently, computing the Hessian becomes computationally intensive. Therefore, throughout our implementation, we rely on gradient-based solvers:
\begin{equation}\label{eqn: gradient update}
\tau^{n+1} = \tau^{n} + F\left(\dfrac{\delta L}{\delta \tau}[\tau^1]\,,\cdots\dfrac{\delta L}{\delta \tau}[\tau^n]\right)\,.
\end{equation}
The formula suggests that we update the evaluation of $\tau$ by incorporating the gradient information from  past trajectories. Among various gradient-based methods, we opt for Stochastic Gradient Descent (SGD) and utilize both the Armijo method and the Adaptive Gradient (AdaGrad) method for estimating step sizes.

SGD is a variant of gradient descent method and becomes immensely popular in machine learning~\cite{BCN-SGD-2018}. It is designed to tackle optimization problems of the form $f(x) = \frac{1}{N}\sum_i f_i(x)$. Its approach involves replacing $\nabla f$ in each iteration with $\nabla f_\xi$, where $\xi$ is randomly drawn from $1, \cdots, N$. This process ensures that $\nabla f$ is recovered at the expectation level: $\nabla f = \mathbb{E}[\nabla f_\xi]$. In other words the gradient chosen by the SGD on average recovers the vanilla gradient. However, per iteration, the computational cost for SGD is significantly reduced since it only calls for one gradient computation per iteration. The theory behind SGD has been extensively developed in the literature, and we will only reference one key result on convergence in~\cite{BCN-SGD-2018}.
\begin{theorem}\label{thm: SGD converge}
Assuming that the gradient is obtained unbiasedly:
\[
\mathbb{E}_\xi[\nabla f_\xi(\tau)] = \nabla f(\tau)\,,
\]
then, under certain assumptions on the variants of $\nabla f_i$, we achieve the convergence of SGD. Namely, let
\begin{equation} \label{0421}
\tau^{k+1}=\tau^k-\alpha_k \nabla f_{\xi_n}(\tau^k)\,,
\end{equation}
then with properly tuned $\alpha_k$:
\[\liminf_{k \rightarrow \infty} \,\mathbb{E}[\|\nabla f(\tau^k)\|_2^2] = 0 \,.\]
\end{theorem}

This strategy aligns well with our objectives, since our loss function~\eqref{eqn: PDE-constrained opt} takes the form $L=\frac{1}{N}\sum_iL_i$, enabling direct application of SGD. Specifically, at each iteration, we sample a random variable $\xi_n$ from a uniform distribution, 
$$\xi_n(\omega) \sim \text{Unif}\{1,2,..,N\}\,,$$
and update $\tau$ only using the selected sub-loss function:
\begin{equation}\label{eqn: SGD-update}
\tau^{n+1} = \tau^n - \alpha_n \dfrac{\delta L_{\xi_n}}{\delta \tau}[\tau^n]\,.
\end{equation}
Given that $L_i$ only captures the discrepancy associated with the $i$-th data point, this formulation essentially proposes randomly selecting one experiment to conduct for each update.

To deploy Theorem~\ref{thm: SGD converge}, we note that  each $\xi_n$ is independent by construction, and therefore
\begin{equation*}
\begin{aligned}
\mathbb{E}_{\xi_n}\left[\dfrac{\delta L_{\xi_n}}{\delta \tau}[\tau^n]\right] = \sum_{i=1}^N \frac{1}{N} \dfrac{\delta L_{i}}{\delta \tau}[\tau^n] = \dfrac{\delta L}{\delta \tau}[\tau^n]\,,
\end{aligned}
\end{equation*}
which satisfies the unbiased assumption for SGD. The investigation into variance is included in Supplementary Material~SM2. 

The computation of gradient $\frac{\delta L_i}{\delta\tau}$ will be elaborated in Section~\ref{sec: gradient computation}. It will be shown that the computation involves both the forward and adjoint PDE solvers, and thus call for PDE simulation twice.

Step-size is also crucial in updating \eqref{eqn: SGD-update}. Often times, this step size is scheduled to diminish in a specific manner as $n$ increases, gradually reducing variance error. In practical settings, two popular methods are utilized: the Armijo line search condition and AdaGrad, both of which are detailed below.

\vspace{0.1in}
\noindent \textbf{Armijo line search.} 
In each iteration, Armijo line search can be used to adjust the step-size. This is to pre-set $\alpha_n$ at the maximum step-size $\alpha_{\max}$, and to half its value  until the following condition~\eqref{eqn: SGD-armijo rule} is satisfied:
\begin{equation}\label{eqn: SGD-armijo rule}
f_{\xi_n}\left[\tau^n - \alpha_n \frac{\delta f_{\xi_n}}{\delta \tau}[\tau^n]\right] \leq f_{\xi_n}[\tau^n] - c\alpha_n \left\|\frac{\delta f_{\xi_n}}{\delta \tau}[\tau^n]\right\|^2_2\,.
\end{equation}
It is proved in \cite{SGD-armijo} that for convex objective functions, convergence can be achieved,
\[
\mathbb{E}_{\xi}\left[f[\bar{\tau}^n] - f[\tau^*]\right] \leq \frac{C}{n}\left\|\tau^0 - \tau^*\right\|^2\,,
\]
where the expectation $\mathbb{E}_{\xi}[\cdot]$ is taken with respect to all random variables in the first $n$ iterations: $\xi_1,\cdots,\xi_n$.
For general (non-convex) objective functions, under certain properties of objective and with Armijo line-search step size, the algorithm converges in the rate of $O(\frac{1}{n})$:
\[
\min_{1\leq k\leq n} \, \mathbb{E}[\|\nabla f(\tau^k)\|^2]\leq \frac{C}{n}\,.
\]

Deploying Armijo line search strategy in SGD update, we summarize the following Algorithm~\ref{alg: SGD-Armijo}.
\begin{algorithm}
\caption{SGD-Armijo}\label{alg: SGD-Armijo}
\begin{algorithmic}
\REQUIRE{$\tau^0, \varepsilon, \phi, \psi, \alpha_{\max}, c$}
\WHILE{$\|\nabla L[\tau^{n-1}]\|_2 \geq \varepsilon$}
\STATE{Pick $\xi_{n} \sim \text{Unif}(1,N)\,.$}
\STATE{Compute $\frac{\delta L_{\xi_{n}}}{\delta \tau}[\tau^{n-1}]$ by forward and adjoint solver.}
\STATE{Determine step size $\alpha_n$ by Armijo condition \eqref{eqn: SGD-armijo rule}.}
\STATE{Update $\tau^n = \tau^{n-1} - \alpha_n \frac{\delta L_{\xi_{n}}}{\delta \tau}[\tau^{n-1}]\,$.}
\ENDWHILE
\end{algorithmic}
\end{algorithm}

\vspace{0.1in}
\noindent \textbf{Adaptive Gradident (AdaGrad).} 
Another strategy for adjusting the step size is the AdaGrad method~\cite{Duchi-AdaGrad-2011}. It operates as a Gauss-Newton method that utilizes gradient information to approximate the Hessian, and subsequently adjusts the step size automatically based on this Hessian information.
Define the approximate Hessian as:
\begin{equation} \label{eqn: ada-matrix}
 G^n = \sum_{k=1}^n \nabla f_{\xi_k}\otimes \nabla f_{\xi_k}\,,
\end{equation}
the update \eqref{0421} now reads:
\begin{equation}\label{eqn: tau-update-ada}
\tau^{n+1} = \tau^n - \alpha(\delta I + G^n)^{-1/2}\nabla f_{\xi_n}\,,
\end{equation}
with a user-defined small $\delta$ that avoids singularity. The convergence of Adagrad is obtained in 
~\cite{DBBU-Ada-2022}:
\begin{theorem}\label{eqn: adagrad converge}
Under smooth and convex loss function, bounded gradient and certain assumptions on $F$ and $\nabla f$, for total $n$ iterations,
\[\mathbb{E}_\xi[\|\nabla f(k)\|^2] = O\left(\frac{1}{\sqrt{n}}\right)\,,\]
and for non-convex loss function, 
\[\mathbb{E}_\xi[\|\nabla f(k)\|^2] = O\left(\frac{\ln(n)}{\sqrt{n}}\right)\,,\]
i.e. $\liminf \mathbb{E}_\xi[\|\nabla f(k)\|^2 = 0.$
\end{theorem}

Deploying AdaGrad in the framework of SGD, the algorithm in our context is summarized in Algorithm~\ref{alg: SGD-AdaGrad}.
\begin{algorithm}
\caption{SGD-AdaGrad}\label{alg: SGD-AdaGrad}
\begin{algorithmic}[1]
\REQUIRE{$\tau^0, \varepsilon, \alpha, \phi, \psi, \delta$}
\WHILE{$\|\nabla L[\tau^{n-1}]\|_2 \geq \varepsilon$}
\STATE{Pick $\xi_{n} \sim \text{Unif}(1,N)\,.$}
\STATE{Compute $\nabla L_{\xi_{n}}[\tau^{n-1}]$ by forward and adjoint solver.}
\STATE{Update $G_n$ by \eqref{eqn: ada-matrix}.}
\STATE{Update $\tau^n$ by \eqref{eqn: tau-update-ada}.}
\ENDWHILE
\end{algorithmic}
\end{algorithm}

\subsection{Gradient computation}\label{sec: gradient computation}
The computation of the gradient plays a central role in the SGD algorithm for solving our PDE-constrained optimization~\eqref{eqn: PDE-constrained opt}. To do so, we follow the standard calculus of variation strategy. We present the detail of the derivation of the Fr\'echet derivative of loss function  in this subsection. To be more specific, given any source-test pair $(\phi,\psi)$ as described in \eqref{eqn: PDE-constrained opt}, we are to compute the loss function's derivative against the parameter $\tau$.

To do so, we first define the adjoint system:
\begin{equation} \label{adjoint-p2}
\left\{\begin{array}{ll}
\partial_t p + \mu v(\omega) \partial_x p = - \dfrac{1}{\tau(\omega)}\mathcal{L}[p]\,, &\quad 0<x<1 \\[3mm]
p(t=T, x,\mu,\omega) = 0\,, \\[3mm]
p(t, x=0,\mu,\omega) = l \dfrac{h^*}{\mu v\tau\l h^* \r_\omega} \psi(t)\,, &\quad \mu < 0 \\[3mm]
p(t, x=1,-\mu,\omega) = p(t, x=1,\mu,\omega)\,. &\quad \mu<0
\end{array}\right.
\end{equation}
This adjoint equation allows us to explicitly spell out the functional derivative.

\begin{proposition} Given any source-test pair $(\phi,\psi)$ in \eqref{eqn: PDE-constrained opt}, and recall the loss function:
\begin{equation*}
L = \dfrac{1}{2}l^2 = \dfrac{1}{2} \left(\average{\dfrac{\l h \r_{x=0, \mu,\omega}}{\l h^* \r_\omega}\psi}_t - \text{d}\right)^2\,.
\end{equation*}
Its Fr\'echet derivative with respect to $\tau$ is:
\begin{equation}
\begin{split}
\dfrac{\delta L}{\delta \tau}=&\, - \dfrac{l}{\tau^2 \l h^*\r_\omega} \l \phi\,\psi\r_{t,x=0,\mu>0} + \dfrac{l}{\l h^*\r_\omega^2} \dfrac{h^*}{\tau} \l h \psi\r_{t,x=0,\mu,\omega} + \dfrac{1}{\tau h^*} \left\l \mu v p\phi \right\r_{t,x=0,\mu>0}\\
& + \dfrac{1}{\tau h^*}\left\l \mathcal{L}[h] p \right\r_{t,x,\mu} - \dfrac{1}{\tau \average{h^*}_\omega}\left\l \dfrac{h^*}{\l h^*\r_{\omega}} \l h\r_{\mu,\omega} \l p\r_{\mu,\omega} - \average{h}_{\mu,\omega}p\right\r_{t,x,\mu}\,,
\end{split}
\label{eqn: frechet dev}
\end{equation}
where $h$ solves~\eqref{eqn: h-BTE-IBVP} and $p$ solves the adjoint equation~\eqref{adjoint-p2}.
\end{proposition}
\begin{proof}
Assume we give a small perturbation to the relaxation time $\tau$: $\tau \to \tau + \tilde{\tau}$, then $L$ should be accordingly adjusted. By definition of the Fr\'echet derivative, we expect
\[
\delta L = \int\frac{\delta L}{\delta\tau}\tilde{\tau} \,\d\omega\, +\mathcal{O}(\|\tilde{\tau}\|^2_2)\,.
\]
The rest of the proof amounts to explicitly express the perturbation in $L$.

To do so, we notice that both $h^*$ and forward solution are now perturbed:
\begin{equation*}
h^* \to h^* + \tilde{h}^*,\ h \to h+\tilde{h}\,,
\end{equation*}
where we can use the definition of $h^*$ to have $\tilde{h}^* = - h^* \frac{\tilde{\tau}}{\tau}$, and that $\tilde{h}$ solves the leading order approximation:
\begin{equation} \label{eqn: perturb-1}
\left\{\begin{array}{ll}
\partial_t \tilde{h} + \mu v(\omega) \partial_x \tilde{h} = \dfrac{1}{\tau}\mathcal{L}[\tilde{h}] - \dfrac{\tilde \tau}{\tau^2}\mathcal{L}[h]  + \dfrac{1}{\tau}\dfrac{\l h \r_{\mu,\omega}}{\l h^* \r_{\omega}} \left(\tilde{h}^* -\dfrac{\l\tilde{h}^*\r_\omega}{\l h^*\r_\omega}h^*\right)\,, & 0<x<1 \\[3mm]
\tilde{h}(t=0, x,\mu,\omega) = 0\,, \\[3mm]
\tilde{h}(t, x=0,\mu,\omega) = - \phi \dfrac{\tilde{\tau}}{\tau^2}\,, & \mu > 0 \\[3mm]
\tilde{h}(t, x=1,-\mu,\omega) = \tilde{h}(t, x=1,\mu,\omega)\,. & \mu>0
\end{array}\right.
\end{equation}
Multiply the perturbed system \eqref{eqn: perturb-1} with $p$, the adjoint system \eqref{adjoint-p2} with $\tilde h$, add them up and take the average over $(d t,d x,d \mu, \frac{\tau}{h^*}d\omega)$, we have, with integration by parts in space to swap out the boundary conditions:
\begin{equation*}
\begin{aligned}
\dfrac{l}{\l h^*\r_{\omega}} \left\langle \tilde{h}\,,\psi\right\rangle_{t,x=0,\mu<0,\omega} = & \left\langle \tilde{\tau}\,, \dfrac{1}{\tau h^*} \mu v p \phi \right\rangle_{t,x=0,\mu>0,\omega} 
+ \left\l \tilde{\tau}\,, \dfrac{1}{\tau h^*}\mathcal{L}[h]p \right\r_{t,x,\mu,\omega}\\
& 
- \left\l \tilde{\tau}\,, \dfrac{\average{h}_{\mu,\omega}}{\tau \l h^*\r_{\omega}} \left(\dfrac{h^*}{\l h^*\r_{\omega}}\l p\r_{\mu,\omega} - p\right)\right\r_{t,x,\mu,\omega} \,.
\end{aligned}
\end{equation*}
Formally writing $\frac{\delta h}{\delta\tau}= \tilde{h}/{\tilde{\tau}}$, we have:
\begin{equation*}
\begin{aligned}
\dfrac{l}{\l h^*\r_{\omega}} \left\l \dfrac{\delta h}{\delta \tau},\psi \right\r_{t,x=0,\mu<0}  
= & \dfrac{1}{\tau h^*} \left\l \mu v p\phi \right\r_{t,x=0,\mu>0}
+ \dfrac{1}{\tau h^*}\left\l \mathcal{L}[h] p \right\r_{t,x,\mu}\\
& - \dfrac{1}{\tau \average{h^*}_\omega}\left\l \dfrac{h^*}{\l h^*\r_{\omega}} \l h\r_{\mu,\omega} \l p\r_{\mu,\omega} - \average{h}_{\mu,\omega}p\right\r_{t,x,\mu}\,.
\end{aligned}
\end{equation*}
Noticing $L=\frac{1}{2}l^2$, we apply the chain rule for:
\begin{equation}\label{eqn:frechet_preparation}
\begin{aligned}
\dfrac{\delta L}{\delta \tau}(\omega) =& \dfrac{d L}{d \l h \r_{x=0,\mu,\omega}} \dfrac{\delta \l h \r_{x=0,\mu,\omega}}{\delta \tau} + \dfrac{d L}{d \l h^* \r_{\omega}} \dfrac{\delta \l h^* \r_{\omega}}{\delta \tau},\\
= & \dfrac{l}{\l h^* \r_\omega} \left\l \dfrac{\delta \l h\r_{x=0,\mu,\omega}}{\delta \tau},\psi \right\r_{t} -  l\dfrac{\l h\,,\psi\r_{t,x=0,\mu,\omega}}{\l h^*\r_\omega^2} \dfrac{\delta \l h^* \r_{\omega}}{\delta \tau}.
\end{aligned}
\end{equation}
Recalling the definition of $h^*$ and boundary condition for $h$ in~\eqref{eqn: h-BTE-IBVP}, 
\begin{equation*}
\dfrac{\delta \l h \r_{x=0,\mu>0,\omega}}{\delta \tau} = 
- \dfrac{\l\phi\r_{\mu>0}}{\tau^2}\,,\ 
\dfrac{\delta \l h^* \r_{\omega}}{\delta \tau} = - \dfrac{h^*}{\tau}\,.
\end{equation*}
Thus we can write~\eqref{eqn:frechet_preparation} by plugging in the above:
\begin{equation*}
\begin{aligned}
\dfrac{\delta L}{\delta \tau}
= & - \dfrac{l}{\tau^2 \l h^*\r_\omega} \l \phi\,\psi\r_{t,x=0,\mu>0} 
+ \dfrac{l}{\l h^*\r_\omega^2} \dfrac{h^*}{\tau} \l h\, \psi \r_{t,x=0,\mu,\omega} 
+ \dfrac{1}{\tau h^*} \left\l \mu v p\phi \right\r_{t,x=0,\mu>0}\\
& + \dfrac{1}{\tau h^*}\left\l \mathcal{L}[h] p \right\r_{t,x,\mu} - \dfrac{1}{\tau \average{h^*}_\omega}\left\l \dfrac{h^*}{\l h^*\r_{\omega}} \l h\r_{\mu,\omega} \l p\r_{\mu,\omega} - \l h\r_{\mu,\omega} p\right\r_{t,x,\mu}\,.
\end{aligned}
\end{equation*}
\end{proof}    
\begin{remark}
We should note that the collision operator for $h$ is self-adjoint, so its associated adjoint-equation is relatively simple. It is also possible to compute the Fr\'echet derivative using $g$ in~\eqref{eqn: g-BTE-IBVP} and its associated adjoint equation, but since $\mathcal{L}_0$ is not a self-adjoint operator, the computation can be much more convoluted. For the conciseness of the computation, we stick with the $(h,p)$ pair.
\end{remark}

The derivation of adjoint system follows the standard Lagrange multiplier technique. Let $p(t,x,\mu,\omega)$ be the Lagarange multiplier, $p_1(x,\mu,\omega)$ the multiplier corresponding to initial condition, $p_2(t,\mu^+,\omega), p_3(t,\mu^+,\omega)$ the multipliers corresponding to boundary conditions.
\begin{equation*}
\begin{aligned}
\mathcal{L}[h,p,p_i] :=\ & \dfrac{1}{2}(\Delta l)^2 + \left\l \partial_t h + \mu v \partial_x h - \dfrac{1}{\tau}\left(\frac{\l h \r_{\mu,\omega}}{\l h^* \r_{\omega}}h^* - h\right),\ p \right\r_{t, x, \mu, \omega} \\
& + \l h, p_1\r_{0,x,\mu,\omega}
+ \left\l h - \frac{\phi}{\tau}, \ p_2\right\r_{t, 0, \mu>0, \omega} 
+ \l h(-\mu) - h(\mu),\ p_3 \r_{t,1,\mu>0,\omega}.
\end{aligned}
\end{equation*}
With this new Lagrangian, the original optimization problem~\eqref{eqn: PDE-constrained opt} is reformulated as
\[
\min_{h,p,p_i} \mathcal{L}[h,p,p_i]\,.
\]
with its argument living on the extended space. The first-order condition of the optimizer requires $\frac{\delta\mathcal{L}}{\delta h}=0$ pointwise in $(t,x,\mu,\omega)$. From the above system we obtain
\begin{align} 
\left\{\begin{array}{ll}
- \partial_t p - \mu v(\omega) \partial_x p  - \left(\dfrac{1}{\l h^* \r_{\omega}} \l \dfrac{h^*p}{\tau}\r_{\mu,\omega} - \dfrac{p}{\tau}\right) = 0, \\[3mm]
p(t=T,x,\mu,\omega) = 0, \\[3mm]
p_3(t,x=1,\mu,\omega) = \mu v(\omega) p(t,1,\mu,\omega), & \mu>0 \\[3mm]
p_3(t, x=1,-\mu,\omega) = -\mu v(\omega) p(t,1,\mu,\omega), & \mu<0.\\[3mm]
p (t,x=0,\mu, \omega) = \dfrac{l}{\mu v(\omega)\l h^* \r_\omega} \psi(t), & \, \mu<0 \\[3mm]
p(t=0,x,\mu,\omega) = p_1(t=0,x,\mu,\omega), \\[3mm]
p_2(t,x=0,\mu,\omega) = \mu v(\omega) p(t,x=0,\mu,\omega) - l\dfrac{1}{\l h^*\r_\omega} \psi(t), & \, \mu>0.
\end{array}\right.
\end{align}
Organize the equations above, we arrive at the adjoint system spelled out in~\eqref{adjoint-p2}.

\section{Numerical results} \label{sec: numerics}
We present several numerical examples to illustrate the reconstruction process. The numerical setup considers $t\in [0,1.5]$, $x\in [0,1]$, $\mu\in(-1,1)$, with the frequency range truncated to $\omega\in [0.4,4]$, assuming that higher frequencies contribute negligibly. The discretization parameters are chosen as $(\Delta x,\Delta t,\Delta\omega)=(0.02, 0.005, 0.4)$, which provide sufficient numerical accuracy. For discretization in $\mu$, we employ Gauss-Legendre quadrature with $N_\mu = 64$. The time-stepping is implemented using the simple forward Euler method, and the spatial discretization employs an upwind scheme. The group velocity is given by $v(\omega) = 2.5 - 0.2\omega$. In this discrete setting, $\tau(\omega)$ becomes a $10$-dimensional vector to be reconstructed.

\subsection{Numerical study of the forward equation} \label{sec: numerical forward eqn}
We first set $\tau$ at the ground-truth $\tau^*$:
\[
\tau^*(\omega) = \dfrac{1}{\sqrt{5\omega}} + 1\,,
\]
and we run the forward equation~\eqref{eqn: h-BTE-IBVP} (or equivalently, equation~\eqref{eqn: g-diff-regime} with $\varepsilon=1$) to study the solution profile for $h$. Figure~\ref{fig: truth and equilibrium} (a) shows the profile of the ground-truth $\tau^*$ and Figure~\ref{fig: truth and equilibrium} (b) presents the equilibrium distribution.
\begin{figure}[htbp]
    \centering
    \begin{subfigure}[b]{0.35\textwidth}
         \centering
         \includegraphics[width=\textwidth]{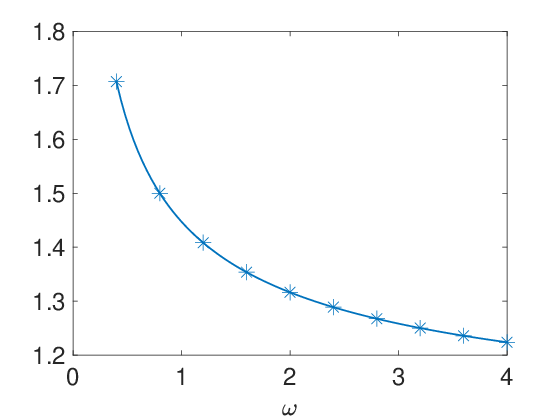}
         \caption{Groundtruth: $\tau^*(\omega)\,.$}
         \label{fig: truth tau}
    \end{subfigure}
    \hspace{1.2cm}
    \begin{subfigure}[b]{0.35\textwidth}
         \centering
         \includegraphics[width=\textwidth]{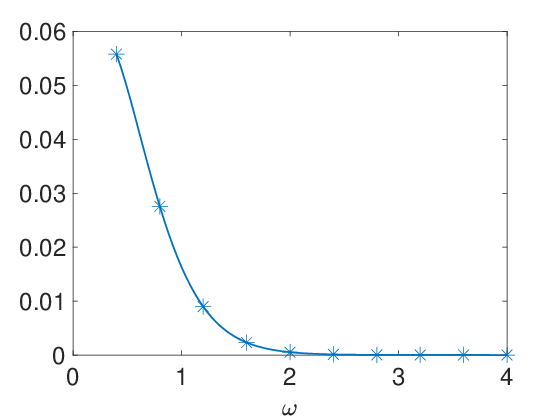}
         \caption{Equilibrium: $h^\ast(\omega)\,.$}
         \label{fig: h star eq}
    \end{subfigure}
    \caption{$\tau^\ast$ and $h^\ast\,.$}
    \label{fig: truth and equilibrium}
\end{figure}

Different panels in Figure~\ref{fig: forward solution} demonstrate the solution $\average{h}_\mu$ at various time slides. Initially the system is set at rest and a narrow Gaussian source is injected around $t_0=0.04$ and concentrates at $\mu_0 = 0.96$, $\omega_0 = 2$. From panel (a)-(c), it is seen that the solution is predominantly governed by its ballistic component, with the solution profile moving towards the right hand side boundary $x=1$ before being bounced back, as shown in panel (d). After the solution hits back to $x=0$, it dissipates out from the left boundary.

\begin{figure}[htbp]
     \centering
     \begin{subfigure}[b]{0.32\textwidth}
         \centering
         \includegraphics[width=\textwidth]{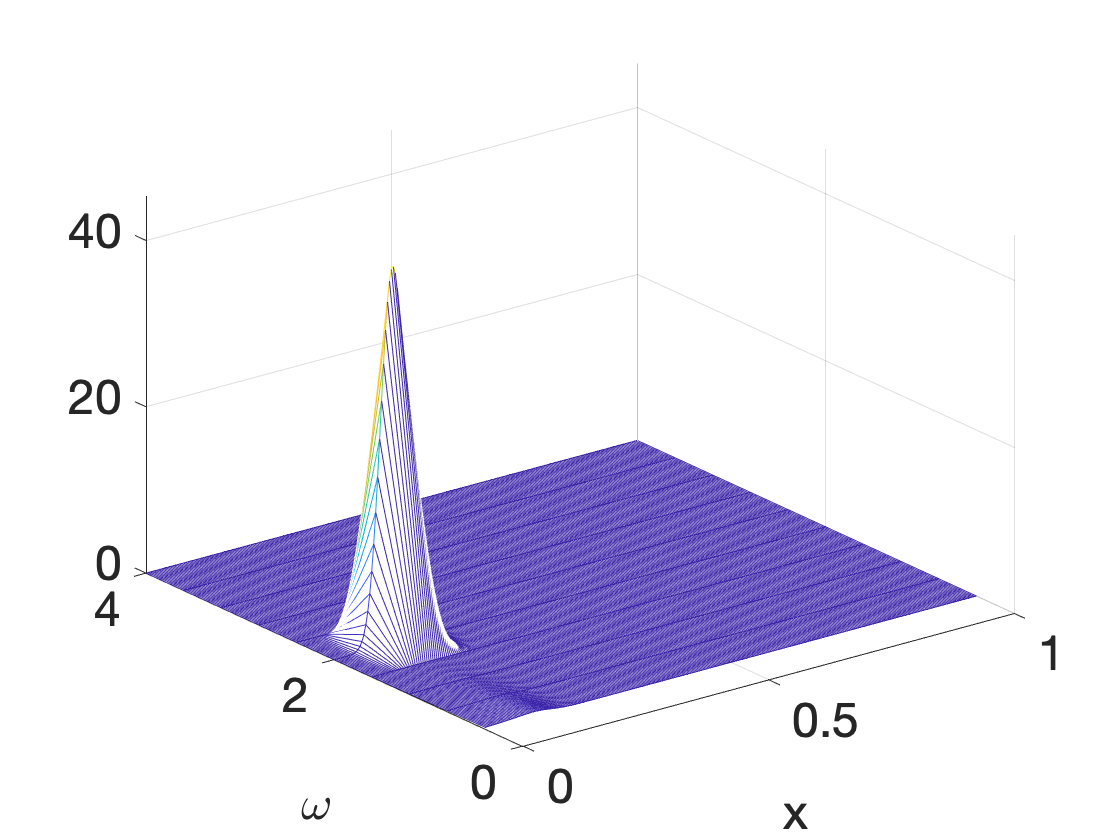}
         \caption{$\langle h \rangle_{\mu}(t=0.1)$}
     \end{subfigure}
     \hfill
     \begin{subfigure}[b]{0.32\textwidth}
         \centering
         \includegraphics[width=\textwidth]{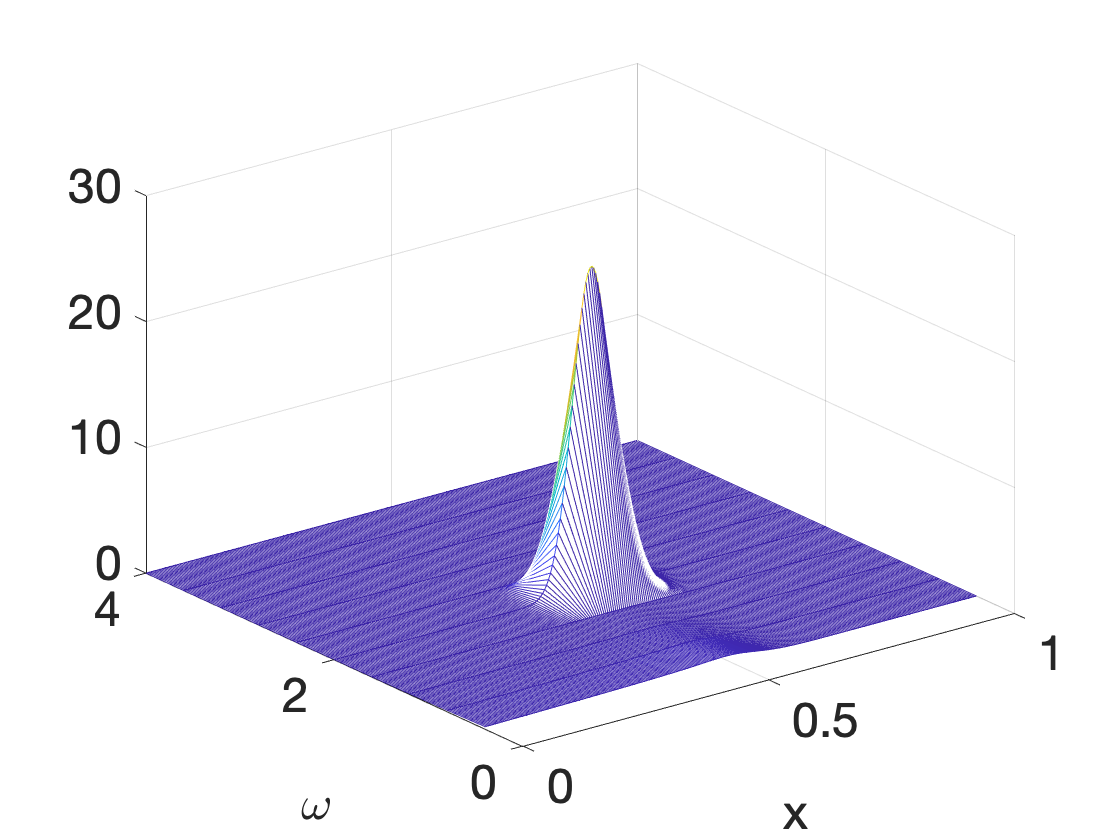}
         \caption{$\langle h \rangle_{\mu}(t=0.3)$}
     \end{subfigure}
     \hfill
     \begin{subfigure}[b]{0.32\textwidth}
         \centering
         \includegraphics[width=\textwidth]{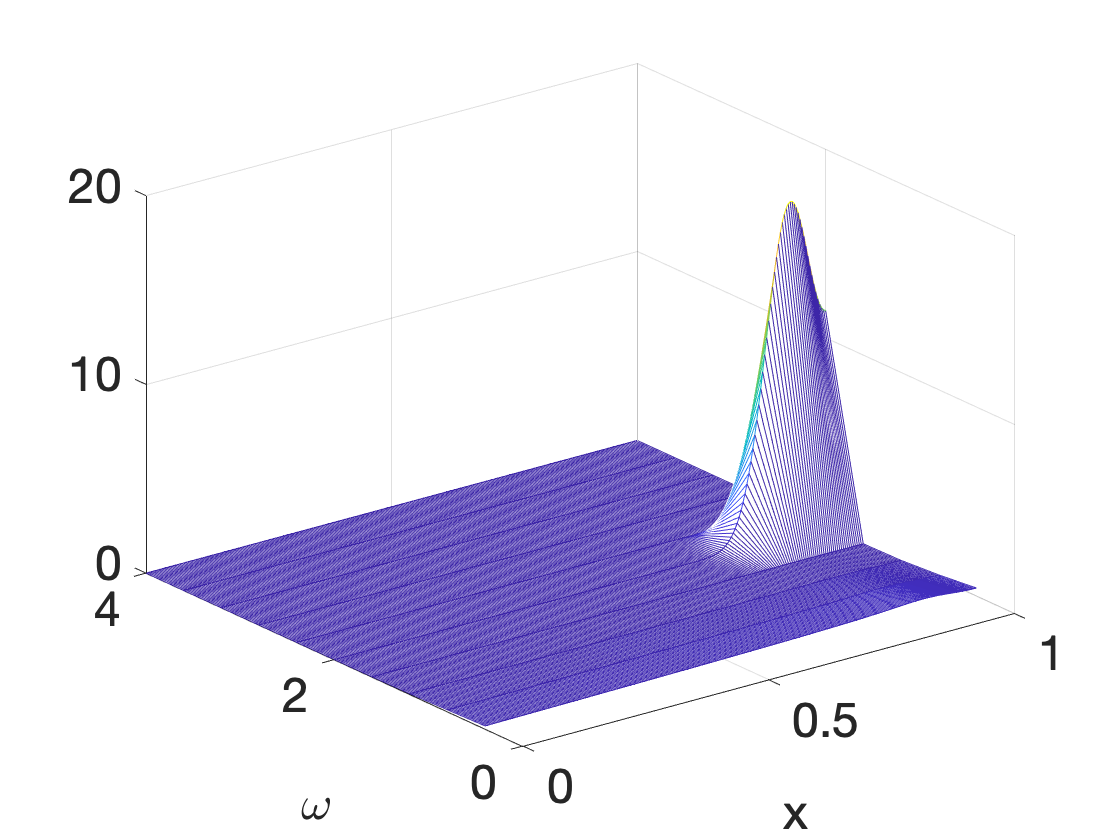}
         \caption{$\average{h}_{\mu}(t=0.5)$}
     \end{subfigure}
     \hfill
     \\
     \begin{subfigure}[b]{0.32\textwidth}
         \centering
         \includegraphics[width=\textwidth]{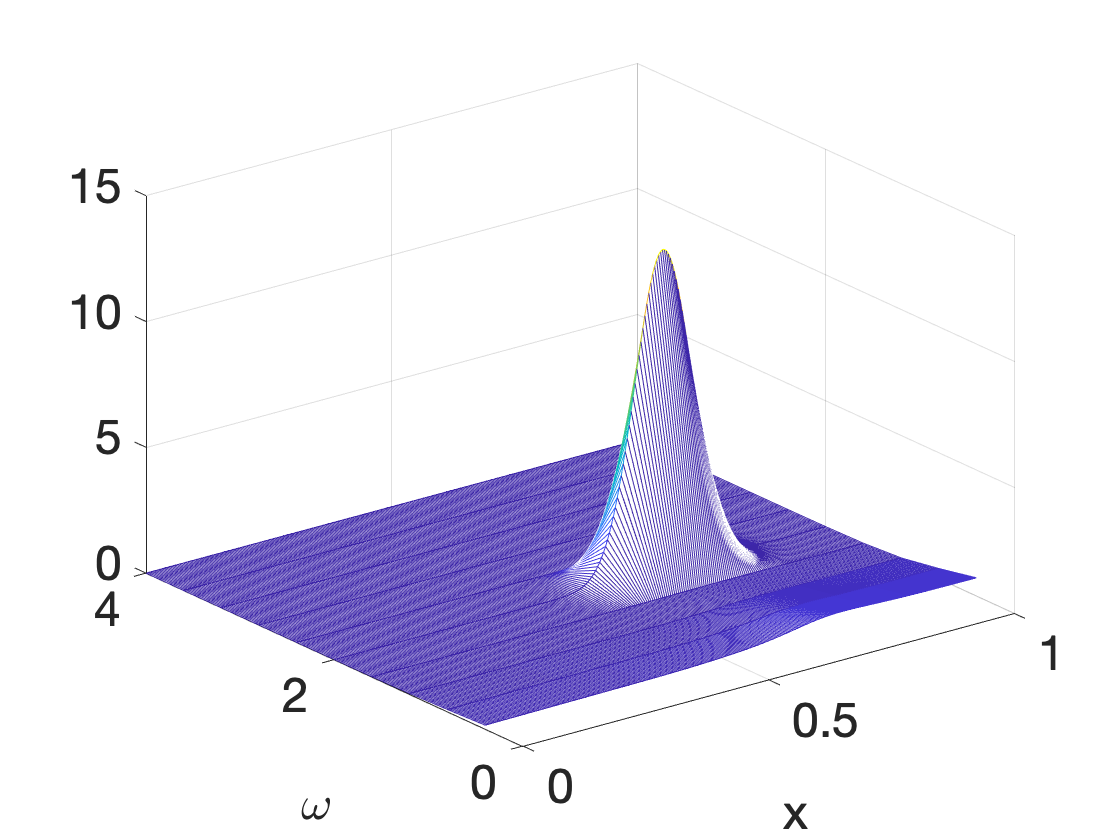}
         \caption{$\langle h \rangle_{\mu}(t=0.7)$}
     \end{subfigure}
     \hfill
     \begin{subfigure}[b]{0.32\textwidth}
         \centering
         \includegraphics[width=\textwidth]{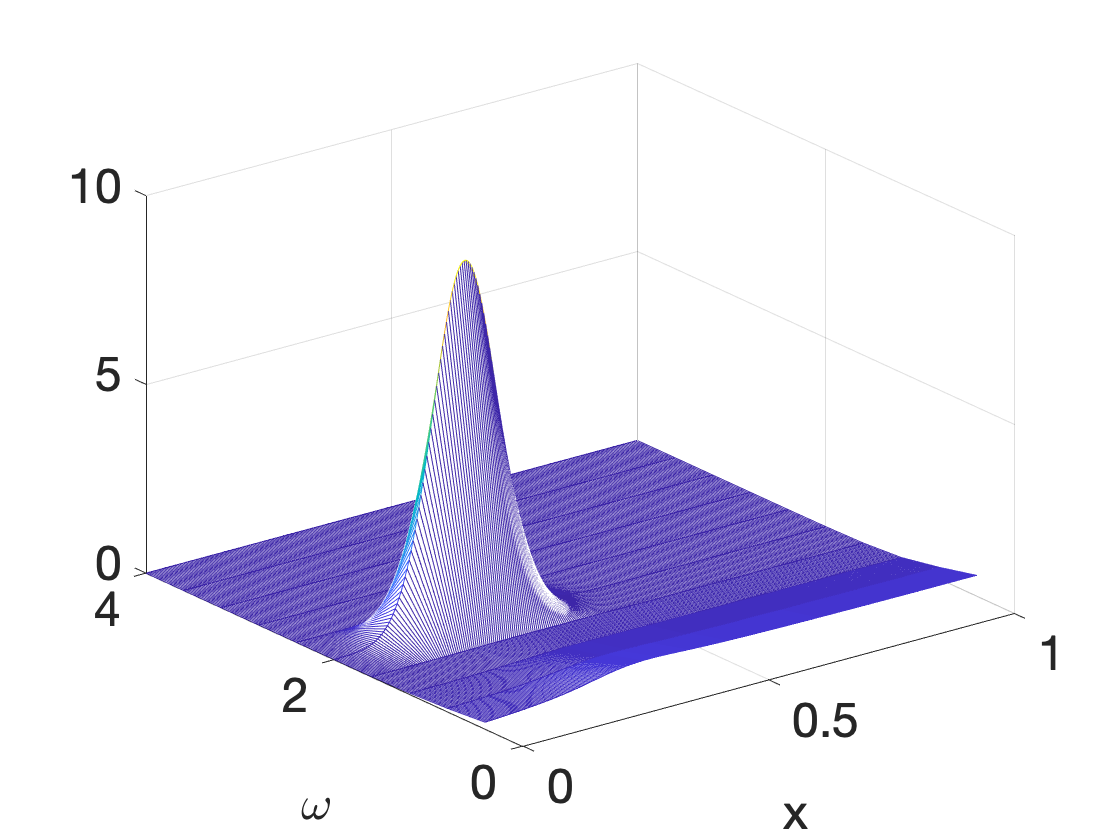}
         \caption{$\langle h \rangle_{\mu}(t=0.9)$}
     \end{subfigure}
     \hfill
     \begin{subfigure}[b]{0.32\textwidth}
         \centering
         \includegraphics[width=\textwidth]{ 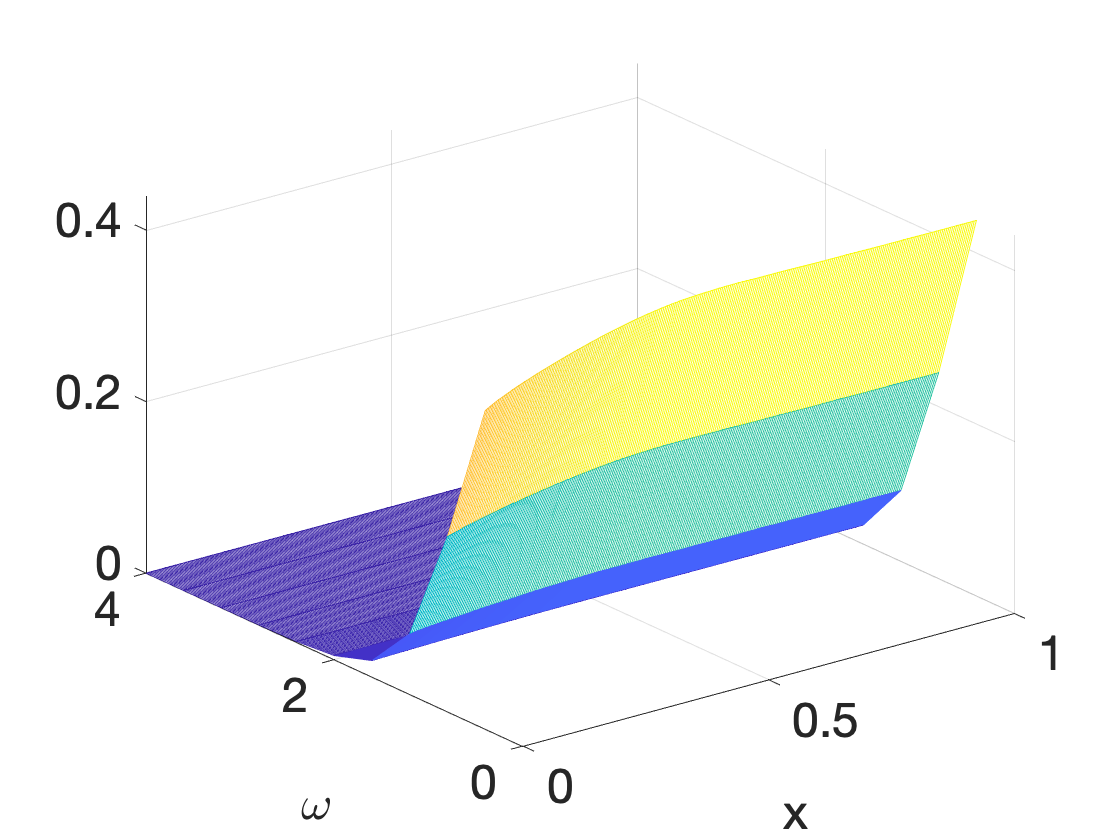}
         \caption{$\langle h \rangle_{\mu}(t=1.2)$}
     \end{subfigure}
     \caption{Forward solution snapshots for $\varepsilon = 1\,.$}
     \label{fig: forward solution}
\end{figure}

Since the ballistic feature dominates the solution, it is straightforward to predict the arrival time of the (bulk of) bounced-back solution. In particular, a source function that is a smooth Gaussian concentrating in $t_0, \mu_0, \omega_0$, its bulk is expected to arrive back at $x=0$ at the following time:
\begin{equation}\label{eqn:arrival_time}
t_R(t_0,\mu_0,\omega_0) := t_0 + \frac{2}{\mu_0 v(\omega_0)}\,. 
\end{equation}
In this specific example, we plug in the values of the choice of $t_0,\mu_0$ and $\omega_0$ to obtain $t_R=1.0321$. In Figure \ref{fig: T measurement at x=0}, we plot the temperature measurement at $x=0$. It is clear there is a temperature bump exactly at $t_R$, verifying that the bulk of the bounced-back solution has arrived at $x=0$ boundary.
\begin{figure}[htbp]
     \centering
     \begin{subfigure}[b]{0.32\textwidth}
         \centering
         \includegraphics[width=\textwidth]{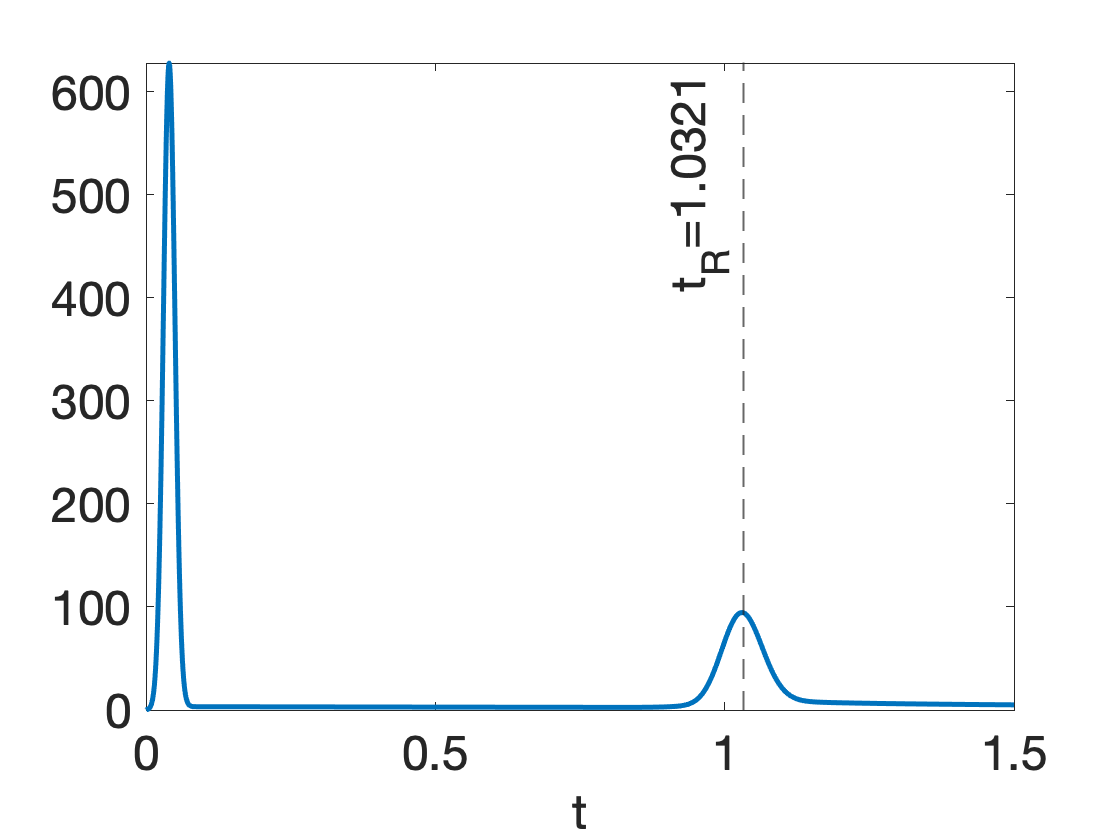}
     \end{subfigure}
     \caption{Temperature measurement at the left boundary: $T(t,x=0)\,.$}
     \label{fig: T measurement at x=0}
\end{figure}

We next test the solution's dependence on $\eps$. We re-run the dimensionless phonon transport equation~\eqref{eqn: g-diff-regime} with $\varepsilon=0.1$.
The solution at various time slides are shown in Figure~\ref{fig: forward solution, eps = 0.1}. The same source, initial and reflective boundary conditions are deployed. Panel (a) still presents some concentration of the solution profile, but the equation quickly dissipates and demonstrates equilibrium behavior comparing with the case when $\varepsilon=1$ in Figure~\ref{fig: forward solution}. In particular, panel (d) shows a slice of $h(\omega)$ at $(t,x,\mu)=(0.16,0.5,0.9675)$. It is obvious that the solution in $\omega$ resembles the local equilibrium (up to a constant), when compared with Figure~\ref{fig: truth and equilibrium} (b).

\begin{figure}[htbp]
     \centering
     \begin{subfigure}[b]{0.35\textwidth}
         \centering
         \includegraphics[width=\textwidth]{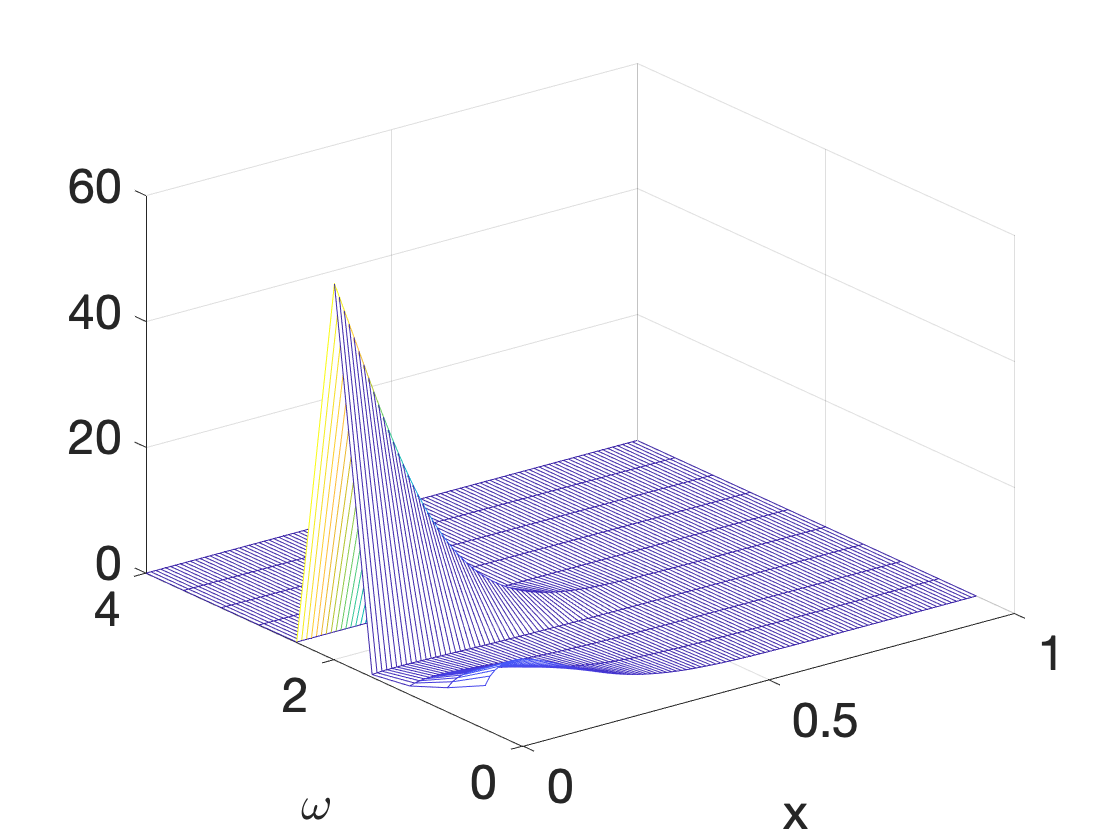}
         \caption{$\average{h}_{\mu}(t=0.04)$}
     \end{subfigure}
     \hspace{1.2cm}
     \begin{subfigure}[b]{0.35\textwidth}
         \centering
         \includegraphics[width=\textwidth]{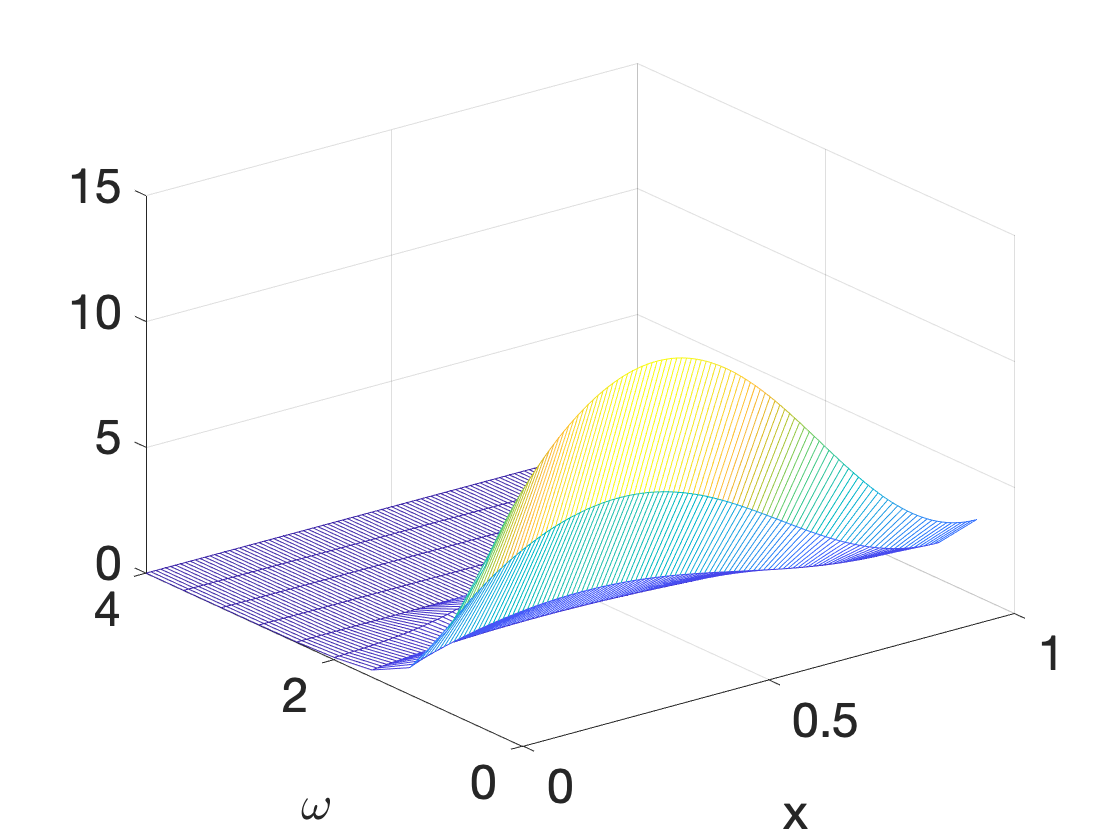}
         \caption{$\langle h \rangle_{\mu}(t=0.08)$}
     \end{subfigure}
     \\
     \begin{subfigure}[b]{0.35\textwidth}
         \centering
         \includegraphics[width=\textwidth]{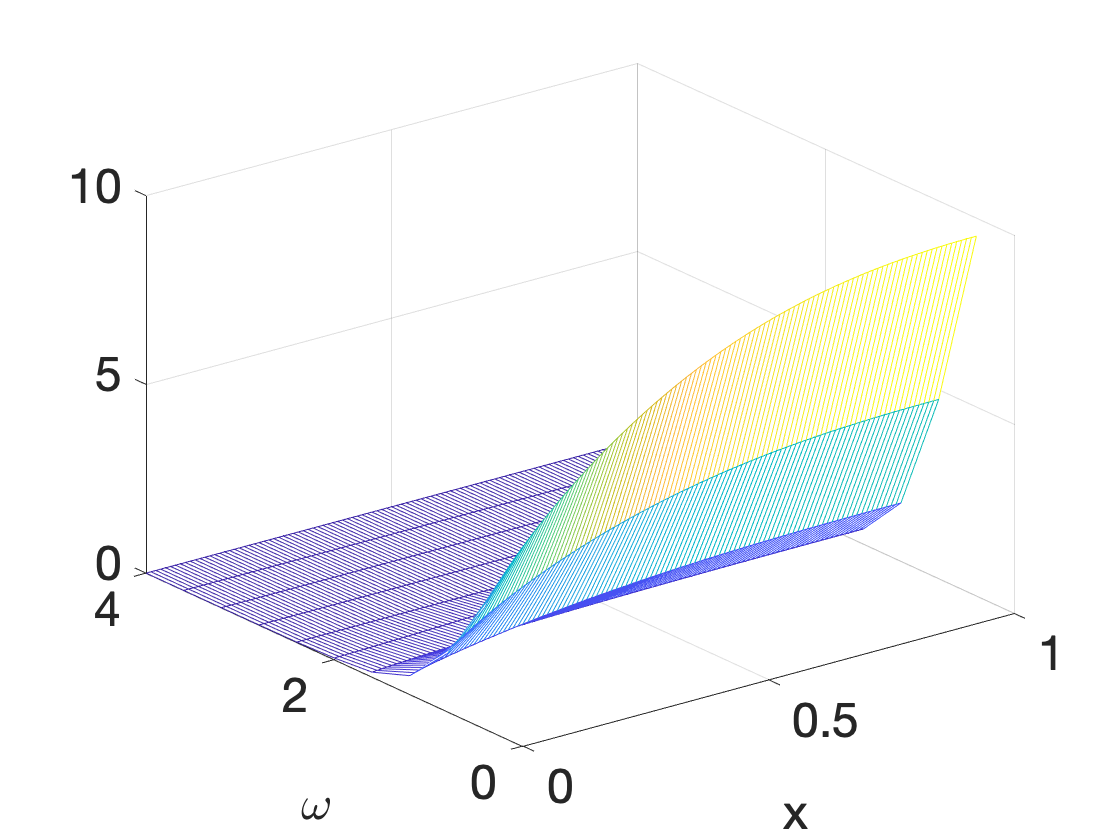}
         \caption{$\langle h \rangle_{\mu}(t=0.12)$}
     \end{subfigure}
     \hspace{1.2cm}
     \begin{subfigure}[b]{0.35\textwidth}
         \centering
         \includegraphics[width=\textwidth]{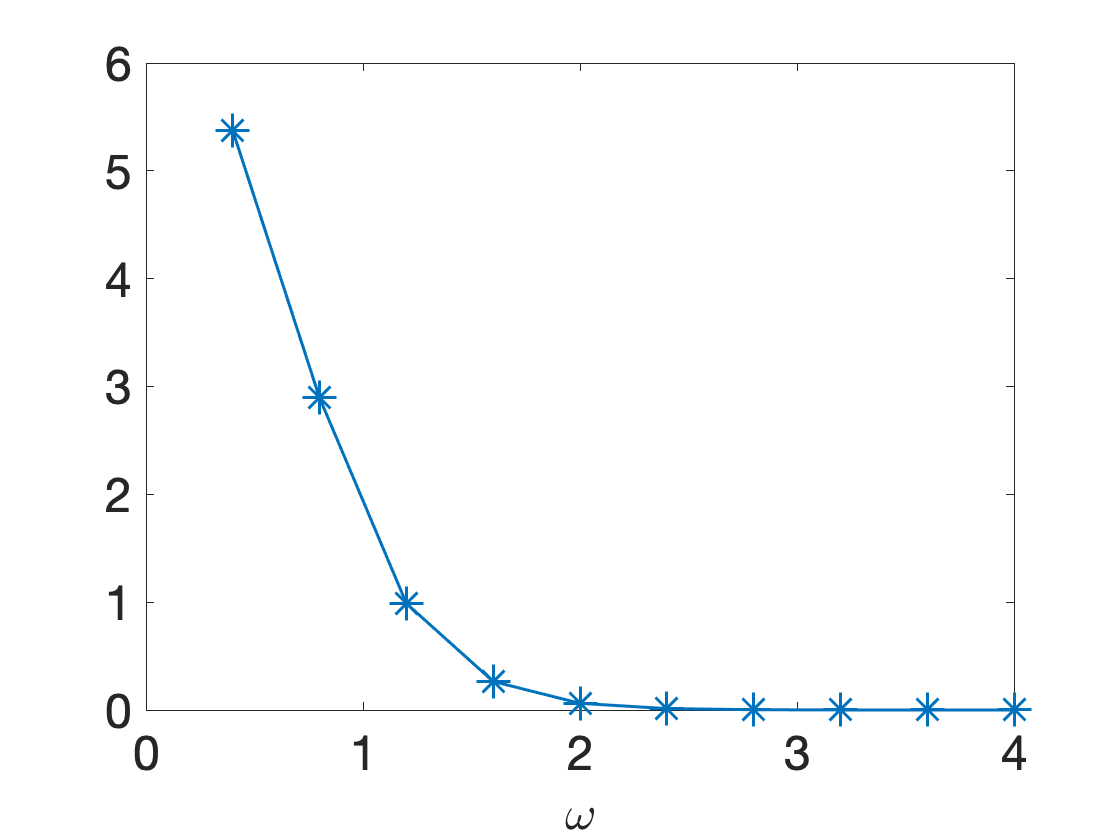}
         \caption{$h(0.12, 0.5, 0.9675, \omega)$}
     \end{subfigure}
     \hfill
     \caption{Forward solution snapshots for $\varepsilon = 0.1\,.$}
     \label{fig: forward solution, eps = 0.1}
\end{figure}

\subsection{Numerical study of the inverse problem}
We now switch gear towards the inverse problem as described in section~\ref{subsec: inverse setup}, and reconstruct the relaxation coefficient $\tau$ using measured data in the ballistic regime ($\varepsilon=1$). To do so, we deploy the optimization methods discussed in Section~\ref{sec: SGD algs}. Both SGD-Armijo and SGD-AdaGrad are implemented.

The data is generated by calling~\eqref{eqn:real_data}. In particular, we send signal of the form:
\[
\phi_i = \phi_{\eps_1}(t-t_0)\phi_{\eps_2}(\mu-\mu_0) \phi_{\eps_3}(\omega-\omega_i)\,,\quad i = 1\,,\cdots, 10\,,
\]
to be the boundary condition. Numerically $(\eps_1,\eps_2,\eps_3)$ are set to be $(0.01, 0.01, 0.1)$. $(t_0,\mu_0)$ is set to be $(0.1, 0.93)$, and $\omega_i$ takes on all values on the grids $0.4+i\Delta\omega$. For each $\phi_i$, the associated temperature measurement is taken as
\[
\mathrm{d}_i=\Lambda_{\tau^\ast}(\phi_i,\psi_i) = \average{\frac{\average{h_i}_{x=0,\mu,\omega}}{\average{h^*}_\omega} \,,\psi_i}_t\,,
\]
where $h_i$ is the PDE solution ran by the groundtruth $\tau^*$, and $\psi_i$ is the testing window function
\[
\psi_i = \phi_{\eps_4}(t-t_R(t_0, \mu_0, \omega_i))\,,
\]
where $t_R$ is defined in~\eqref{eqn:arrival_time} and $\eps_4$ is set to be $0.08$. In this numerical setting, we have in total $10$ data pairs to reconstruct a $10$-dimensional object $\sigma$.

As an initial guess, we set $\tau$ to be:
\begin{equation*}
\tau^0(\omega) = -0.15 (\omega - 4) + 1.4\,.
\end{equation*}
It is a linear decreasing function that is reasonably different from the groundtruth $\tau^\ast$. To measure the convergence, we quantify both the mismatch function $L[\tau^n]$ defined in~\eqref{eqn: PDE-constrained opt}, and the error of the reconstruction: the $L^2$ distance between the current profile ($\tau^n$) and the groundtruth $\tau^*$:
\begin{equation*}
e^n := \|\tau^n - \tau^\ast\|_2 = \frac{1}{\sqrt{N_\omega}}\left(\sum_{i=1}^{N_\omega}(\tau^n(\omega_i) - \tau^*(\omega_i))^2\right)^{1/2}\,.
\end{equation*}

In Figure~\ref{fig: SGD-profile update}, we plot the reconstruction by SGD-Armijo and SGD-AdaGrad in the two panels. For each algorithm, we show the reconstruction at $8$ different time slides. Both algorithms roughly converge at $n=500$. It is worth noting that the algorithms are stochastic in nature and different coordinates get updated with different rates.

\begin{figure}[htbp]
    \centering
    \begin{subfigure}[b]{0.45\textwidth}
         \centering
         \includegraphics[width=\textwidth]{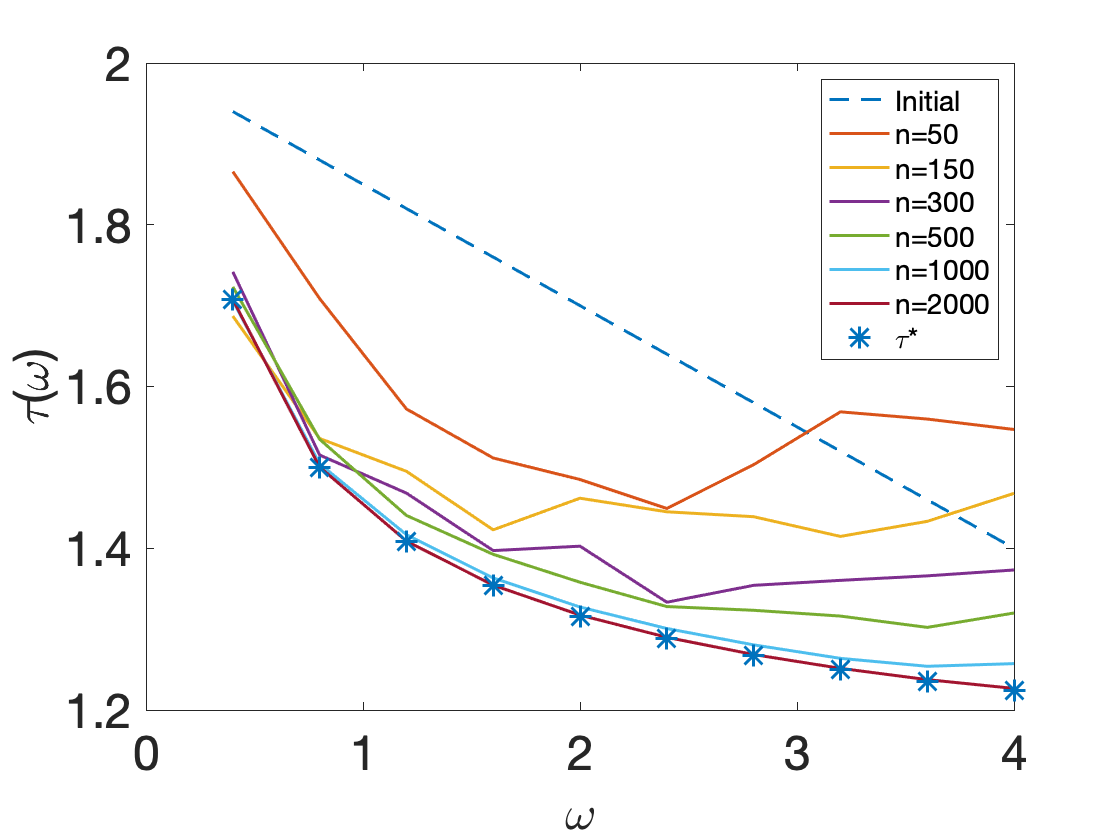}
         \caption{SGD-Armijo}
         \label{fig: SGD-profile1}
     \end{subfigure}
     \hfill
     \begin{subfigure}[b]{0.45\textwidth}
         \centering
         \includegraphics[width=\textwidth]{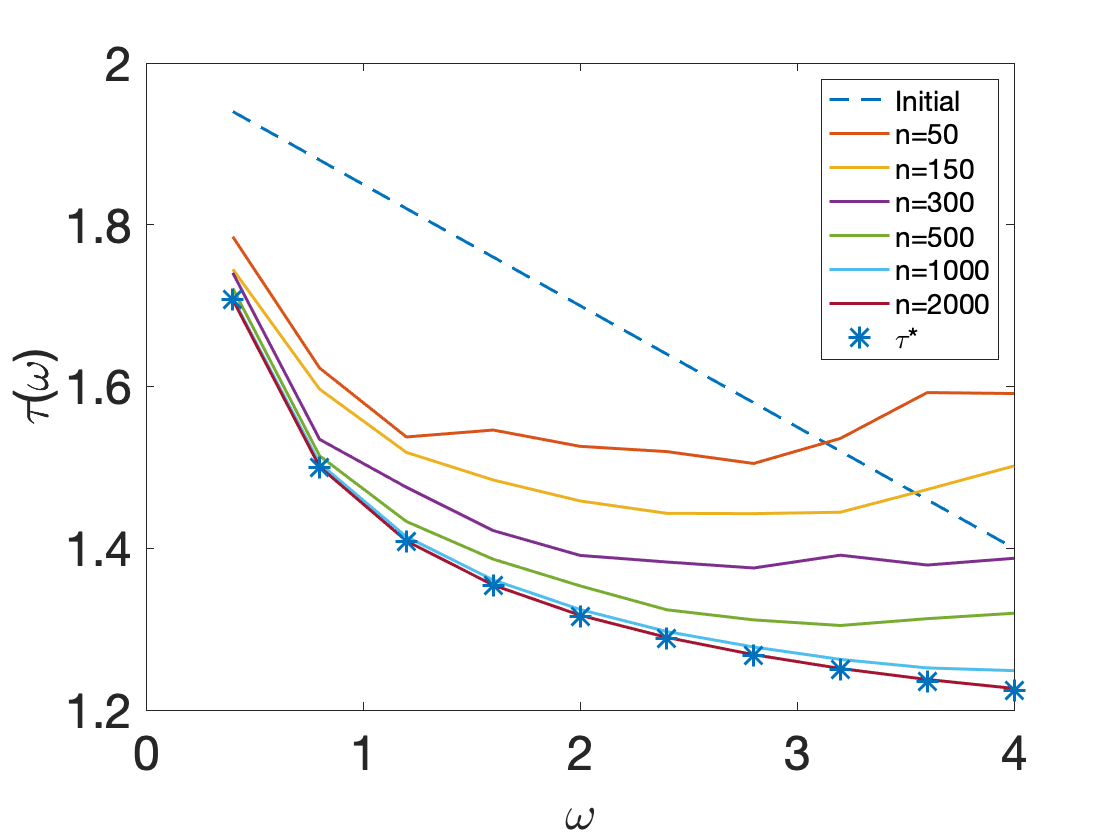}
         \caption{SGD-Adagrad}
         \label{fig: SGD-profile2}
     \end{subfigure}
     \caption{$\tau^n$ profile update.}
     \label{fig: SGD-profile update}
\end{figure}

In Figure~\ref{fig: errors}, we study the convergence of the algorithms. The mismatch and the error are plotted in Panel A and B respectively, with each plot demonstrating the convergence generated by two different algorithms. The plots are presented in semi-log scale, and they are roughly linear, suggesting an exponential decay.

\begin{figure}[htbp]
     \centering
     \begin{subfigure}[b]{0.45\textwidth}
         \centering
         \includegraphics[width=\textwidth]{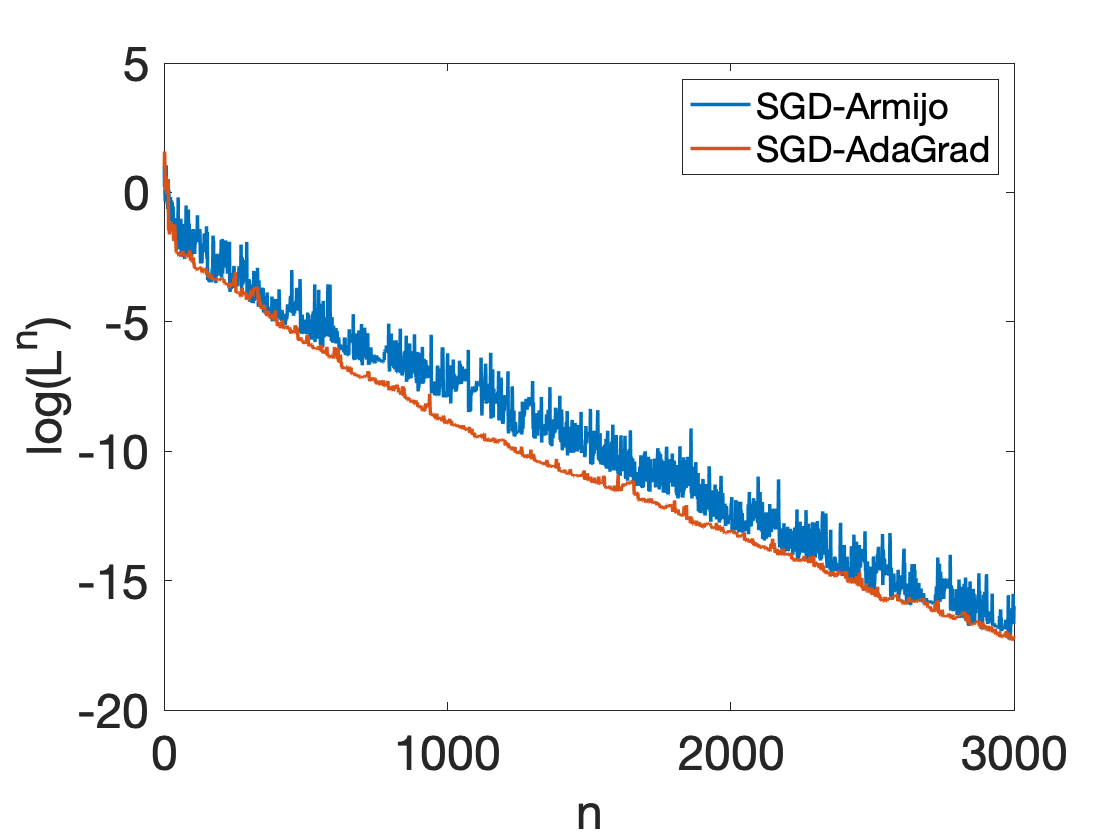}
         \caption{Mismatch $L[\tau^n]$ in semi-log scale.}
         \label{fig: SGD-profile2}
     \end{subfigure}
     \hfill
     \begin{subfigure}[b]{0.45\textwidth}
         \centering
         \includegraphics[width=\textwidth]{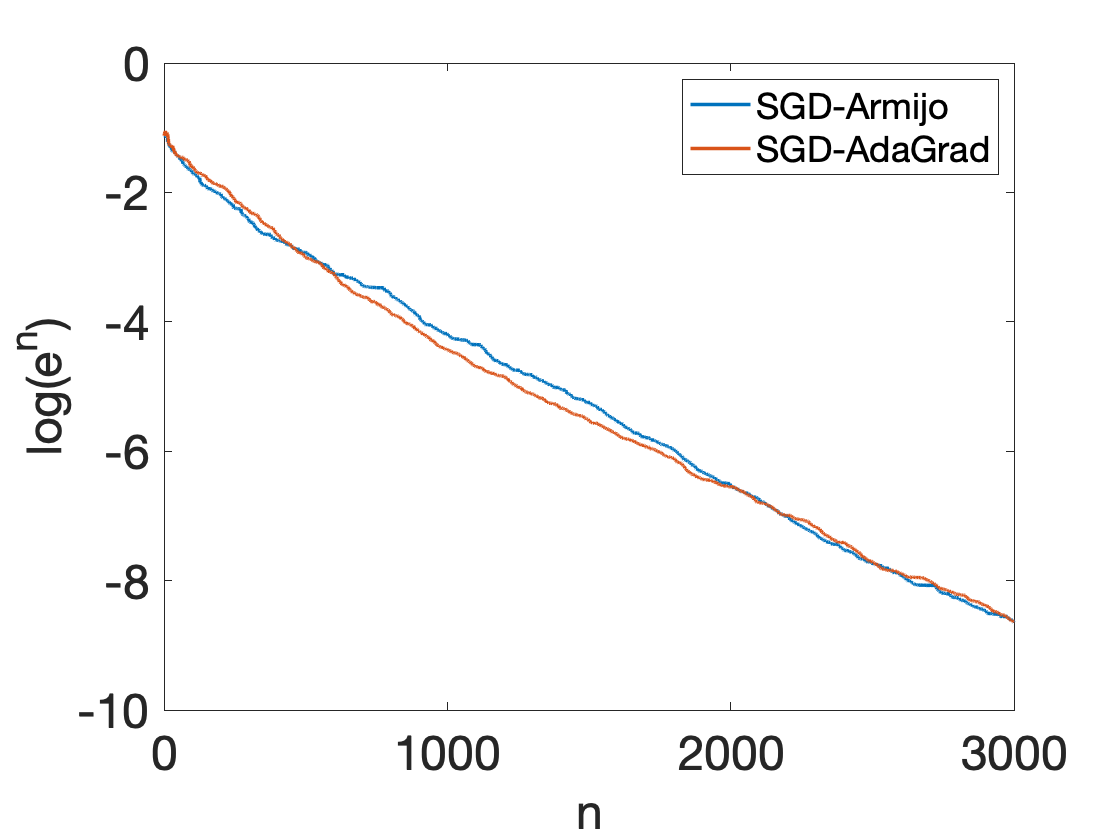}
         \caption{Error $e^n$ in semi-log scale.}
         \label{fig: SGD-error2}
     \end{subfigure}
     \caption{Mismatch and error.}
     \label{fig: errors}
\end{figure}

\subsection{Numerical study of the loss function} \label{sec: numerical loss fcn}
In this subsection, we note that, although not the focus of the current paper, different choices of $\phi_i$and $\psi_i$ can lead to varying sensitivities of the loss function with respect to $\tau$. In our set up, the source functions that we chose are very concentrated beams, and the test window functions are positioned at times that capture the temperature response when the majority of the beam is reflected. As a result, the loss function is expected to be sensitive to $\tau$.

To be more specific, recalling the profile of $\phi_i$ defined in~\eqref{eqn:phi_cond}, we see that it is concentrated at $\omega_i$. As shown in  Figure~\ref{fig: forward solution}, the profile of the solution remain concentrated around $\omega_i$ throughout the evolution. Therefore, the contribution of the loss function from this specific data point $L_i = \frac{1}{2} |\Lambda_\tau(\phi_i, \psi_i) - \Lambda_{\tau^*}(\phi_i,\psi_i)|^2$ is sensitive to changes in $\tau(\omega_i)$. To numerically verify this argument, we compute the gradient of $L_i$ with respect to $\tau$ for various of $i$. In Figure~\ref{fig: show gradients}, we present these gradients evaluated at $\tau^0$. It is obvious that for every $i$, the gradient of $\nabla_\tau L_i$ takes a peak at $\omega_i$, suggesting $L_i$ is sensitive to $\tau(\omega_i)$.

Apart from sensitivity, properties such as Lipschitz continuity and comparison between $\nabla L_i, \nabla L_j$ are discussed in Supplementary Material~SM2. 
\begin{figure}[htbp]
    \centering
    \includegraphics[scale =0.2]{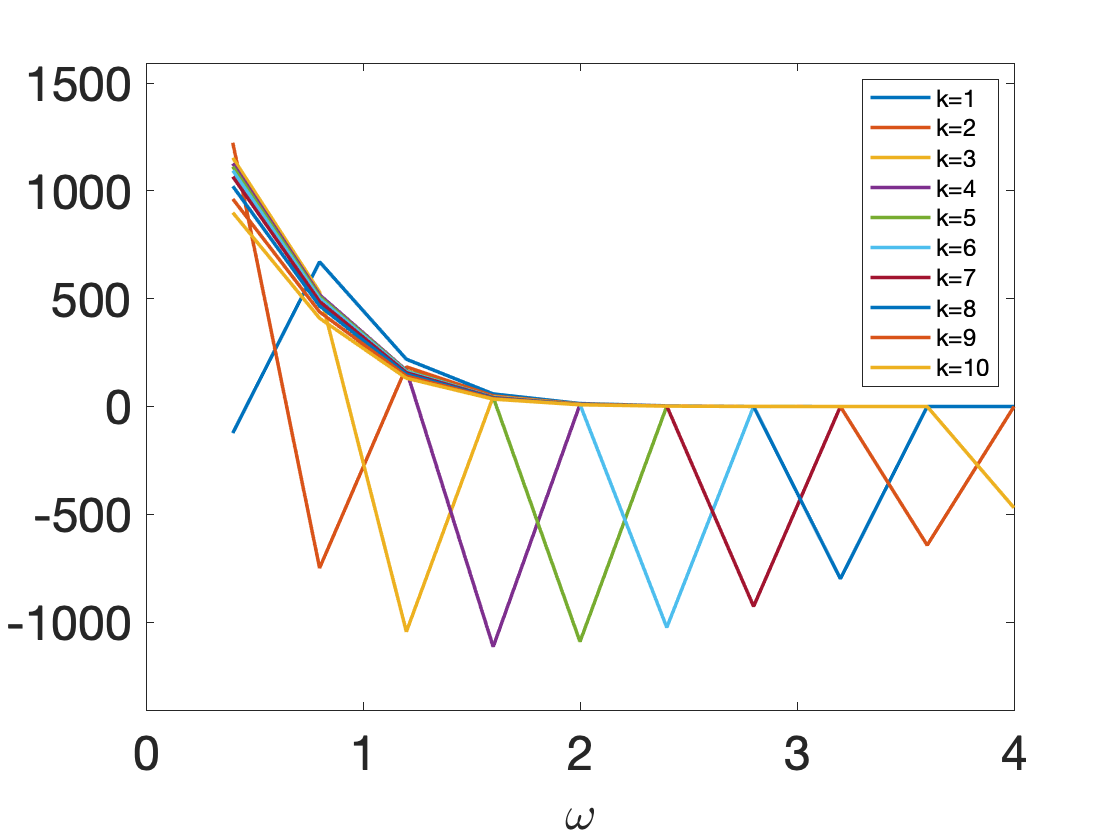}
    \caption{$\nabla L_k[\tau^0]\,.$}
    \label{fig: show gradients}
\end{figure}

\appendix

\section{Physical quantities}
We carefully check the units and cite the values of some physical quantities defined throughout the paper, listed in Table~\ref{tbl: units}. It can be verifed that they are consistent with physical definition~\cite{Majumdar-1993-heat conduct}.
\begin{table}[h]
\footnotesize
\centering
\caption{Units of physical quantities.}
\begin{tabular}{|c|c|c|} \hline
Notation & Physical meaning & Value/units\\ \hline
$\hbar$ & Reduced Planck constant & $1.055\times 10^{-34} \mathrm{m}^2\!\cdot\!\mathrm{kg}\!\cdot\! \mathrm{s}^{-1}$ \\
$k_B$ & Boltzmann constant & $1.381\times 10^{-23} \mathrm{J}\!\cdot\!\mathrm{K}^{-1}$ \\
$T_0$ & reference temperature & $300 \mathrm{K}$\\
$f$ & phonon statistical distribution & scalar\\
$g$ & phonon energy distribution & $\mathrm{J} \!\cdot\! \mathrm{s}\!\cdot\! \mathrm{m}^{-3}$ \\
$g^*(\omega)$ & mode specific heat & $\mathrm{J}\!\cdot\! \mathrm{s} \!\cdot\! \mathrm{m}^{-3}\!\cdot\! \mathrm{K}^{-1}$ \\
$\mu$ & direction & scalar\\
$\omega$ & phonon frequency & $(0,10)\,\mathrm{THz} = (10^{12}, 10^{13})\, \mathrm{s}^{-1}$ \\
$v(\omega)$ & group velocity & $\mathrm{m}\!\cdot\!\mathrm{s}^{-1}$ \\
$\tau(\omega)$ & relaxation time & $\mathrm{ns}$\\
$\Lambda$ & Mean free path & $\mu\mathrm{m}$\\
$D(\omega)$ & density of states & $\mathrm{m}^{-3}\!\cdot\!\mathrm{s}$\\
$\average{g^*}_\omega$ & volumetric heat capacity & $\mathrm{J}\!\cdot\! \mathrm{m}^{-3}\!\cdot\! \mathrm{K}^{-1}$\\ 
$\frac{\average{g/\tau}_{\mu,\omega}}{\average{g^*/\tau}_{\mu,\omega}} (T)$ & temperature & $K$ \\
$\average{\mu v g}_{\mu,\omega} (q)$ & heat flux & $\mathrm{J}\!\cdot\! \mathrm{m}^{-2} \!\cdot\!\mathrm{s}^{-1}$\\
$\kappa$ & heat conductivity & $\mathrm{J}\!\cdot\! \mathrm{s}^{-1} \!\cdot\!\mathrm{m}^{-3}\!\cdot\!\mathrm{K}^{-1}$ \\ \hline
\end{tabular}
\label{tbl: units}
\end{table}
As indicated in Table~\ref{tbl: units},  $\frac{\hbar \omega}{k_B T} = O(1)$, we remark that Bose-Einstein distribution is in the order of: $f_\BE=O(1)$. Hence we impose that the number density of phonon to be of $O(1)$, whereas $g$ shares the order with $\hbar \omega D(\omega)$.


\bibliographystyle{siamplain}

\end{document}


\maketitle

\section{Proof of Proposition~2.2}\label{appen-proof2.2}
\begin{proof}
We will use the following ansatz of former expansion: $g_\eps :=g_0 + \eps g_1 + \eps^2 g_2 + ...$ into~(2.15) and equate the terms with like orders in $\varepsilon$, we have 
\begin{align*}
O(\eps^{-2}):\quad \mathcal{L}_0[g_0]  =\dfrac{\l g_0/\tau\r_{\mu,\omega}}{\l g^*/\tau\r_{\mu, \omega}} g^*(\omega) - g_0 = 0\,.
\end{align*}
This suggests that $g_0$ has no $\mu$-dependence, and behaves as equilibrium on $\omega$, i.e. 
\begin{equation}\label{eqn: g0-u}
g_0(t,x,\mu, \omega) = g^*(\omega)u(t,x)\,,
\end{equation}
where $u(t,x)$ is some function of $t, x$ to be specified later. 
\begin{align*}
O(\eps^{-1}): \quad \mu v(\omega) \p_x g_0 = \dfrac{1}{\tau(\omega)} \mathcal{L}_0[g_1]\,.
\end{align*}
Similar to property (5) of $\mathcal L$ in Proposition~2.2, we can find a unique inverse of $\mathcal{L}_0^{-1}$ in the space of $\left(\text{Ker}\mathcal{L}_0\right)^\perp$, making:
\begin{equation}\label{eqn: g1-u}
g_1(t,x,\mu,\omega) = -\mu v(\omega) \tau(\omega) \p_x g_0 =  -\mu v(\omega) \tau(\omega) g^*(\omega) \p_x u \,.
\end{equation}
At last, 
\begin{align*}
O(\eps^{0}): \quad \partial_t g_0 + \mu v(\omega) \p_x g_1 = \dfrac{1}{\tau(\omega)} \mathcal{L}_0 [g_2]\,.
\end{align*}
Using the conservation of $\mathcal L$ (see property (2) in Proposition~2.2), we have:
\begin{equation*}
\average{ \partial_t g_0 + \mu v(\omega) \p_x g_1 }_{\mu,\omega} = \average{ \dfrac{1}{\tau(\omega)}\mathcal{L}_0 [g_2]}_{\mu,\omega} = 0\,.
\end{equation*}
Substituting $g_0$ and $g_1$  expressed in $u(t,x)$ \eqref{eqn: g0-u} \eqref{eqn: g1-u} into the above equation, it becomes
\begin{equation*}
\begin{aligned}
\l \partial_t g_0 + \mu v(\omega) \p_x g_1 \r_{\mu,\omega} & = 
\average{ (\partial_t u) g^* - \mu^2 v^2 \tau g^*\p_x (\p_x u) }_{\mu,\omega}\,, \\
& = \partial_t u \l g^* \r_{\mu,\omega} - \int_\mu \mu^2 d\mu \int_0^\infty \tau v^2 g^* (\p_x^2 u) d\omega\,, \\
& = \l g^* \r_{\mu, \omega} \partial_t u - \frac{1}{3}\l \tau v^2 g^* \r_{\mu,\omega} (\p_x^2 u) = 0\,.
\end{aligned}
\end{equation*}
recovering~(2.16).

To show~(2.17), we note that 
\[
g_\varepsilon \approx g_0 + \varepsilon g_1 = g^*(\omega) u(t,x) - \eps \mu v(\omega) \tau(\omega) g^*(\omega) \p_x u\,,
\]
and thus by the definition of heat flux~(2.7) and temperature~(2.8), we have 
\begin{align*}
q_\eps(t,x) &:= \frac{1}{\eps} \average{\mu v(\omega)g_\eps}_{\mu,\omega} = - \frac{1}{3}\average{v^2 \tau g^*}_\omega \p_x u + O(\eps)\,,\\
T_\eps(t,x) &:= \frac{\average{g_\eps/\tau}_{\mu,\omega}}{\average{g^*/\tau}_{\mu,\omega}} = u(t,x)+ O(\eps)\,.
\end{align*}
Passing the limit $\eps\to 0$ formally, by definition of conductivity~(2.9), we recover:
\begin{align}\label{eqn: heat-con2}
\lim_{\eps\to0}\kappa_\eps := - \lim_{\eps\to0}\frac{q_\eps}{\p_x T_\eps} = \frac{1}{3}\average{v^2 \tau g^*}_\omega \,,
\end{align}
finishing the proof of~(2.7).
\end{proof}

\section{Convergence of SGD}\label{sec: SGD convergence}
In this subsection we expand our discussion about the convergence of SGD algorithms (Theorem~4.1). Recall that our loss function is in the form of:
\[F[\tau] = \sum_{i=1}^N f_i[\tau]\,,\]
where each $f_i$ is corresponding to different source-test function pair: $\{\phi_i, \psi_i\}.$
And at each step, only one gradient is computed: $\nabla f_{\xi_k}[\tau^k]$, with $\xi_k \sim \text{Unif}\{1,...,N\}$.
\begin{theorem}
(Adpated from~\cite{JZZ-SGD-2020}) Under the following two assumptions:
\begin{itemize}
\item Lipschitz objective gradients. There exists some $L>0$ such that for any $\tau, \hat{\tau}$ that are bounded from above and below:
\begin{align}
\|\nabla F[\tau] - \nabla F[\hat{\tau}]\|_2 \leq L\|\tau - \hat{\tau}\|_2\,.
\end{align}
\item First and second moment limits. There exist $\mu_G \geq \mu>0, M\geq 0$ and $M_V \geq 0$ such that for all $k\in \mathbb{N}$,
\begin{align}
\nabla F[\tau^k]^T \mathbb{E}_{\xi_k}[\nabla F[\tau^k]] \geq \mu \|\nabla F[\tau^k]\|_2^2\,, \label{eqn: SGD-cond-1}\\
\mathbb{E}_{\xi_k}[\nabla F[\tau^k]] \leq \mu_G \|\nabla F[\tau^k]\|_2\,,\label{eqn: SGD-cond-2}\\
\mathbb{V}_{\xi_k}[\nabla f_{\xi_k}[\tau^k]] \leq M + M_V \|\nabla F[\tau^k]\|_2^2\,.\label{eqn: SGD-cond-3}
\end{align}
\end{itemize}
Then with a properly tuned sequence of step-size, SGD algorithm converges, i.e.
\[ \liminf_k \,\mathbb{E}[\|\nabla F[\tau]\|_2^2] = 0 \,. \]
\end{theorem}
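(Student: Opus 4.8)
The plan is to follow the classical analysis of stochastic gradient methods for smooth nonconvex objectives. The starting point is the descent lemma implied by the Lipschitz gradient hypothesis: for the SGD iterate $\tau^{k+1} = \tau^k - \alpha_k \nabla f_{\xi_k}[\tau^k]$ with step size $\alpha_k > 0$,
\[
F[\tau^{k+1}] \le F[\tau^k] - \alpha_k \,\nabla F[\tau^k]^T \nabla f_{\xi_k}[\tau^k] + \frac{L\alpha_k^2}{2}\|\nabla f_{\xi_k}[\tau^k]\|_2^2 .
\]
First I would take the conditional expectation $\mathbb{E}_{\xi_k}[\cdot]$ given the history up through step $k$. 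The cross term is bounded below by \eqref{eqn: SGD-cond-1}, which gives $\nabla F[\tau^k]^T \mathbb{E}_{\xi_k}[\nabla f_{\xi_k}[\tau^k]] \ge \mu\|\nabla F[\tau^k]\|_2^2$; the quadratic term is bounded by writing $\mathbb{E}_{\xi_k}[\|\nabla f_{\xi_k}[\tau^k]\|_2^2] = \mathbb{V}_{\xi_k}[\nabla f_{\xi_k}[\tau^k]] + \|\mathbb{E}_{\xi_k}[\nabla f_{\xi_k}[\tau^k]]\|_2^2$ and invoking \eqref{eqn: SGD-cond-2}--\eqref{eqn: SGD-cond-3}, which yields $\mathbb{E}_{\xi_k}[\|\nabla f_{\xi_k}[\tau^k]\|_2^2] \le M + (M_V + \mu_G^2)\|\nabla F[\tau^k]\|_2^2$. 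Combining these,
\[
\mathbb{E}_{\xi_k}[F[\tau^{k+1}]] \le F[\tau^k] - \Big(\mu - \tfrac{L\alpha_k}{2}(M_V+\mu_G^2)\Big)\alpha_k\,\|\nabla F[\tau^k]\|_2^2 + \frac{LM}{2}\alpha_k^2 .
\]

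Next I would fix the step-size schedule. Imposing the cap $\alpha_k \le \mu/\big(L(M_V+\mu_G^2)\big)$ keeps the bracket above $\mu/2$, so after taking total expectation
\[
\mathbb{E}[F[\tau^{k+1}]] \le \mathbb{E}[F[\tau^k]] - \frac{\mu}{2}\alpha_k\, \mathbb{E}[\|\nabla F[\tau^k]\|_2^2] + \frac{LM}{2}\alpha_k^2 .
\]
Summing this telescoping inequality from $k=1$ to $K$ and using that $F$ is bounded below by some $F_{\inf}$ gives
\[
\frac{\mu}{2}\sum_{k=1}^{K}\alpha_k\, \mathbb{E}[\|\nabla F[\tau^k]\|_2^2] \le F[\tau^1] - F_{\inf} + \frac{LM}{2}\sum_{k=1}^{K}\alpha_k^2 .
\]
If the step sizes additionally satisfy the Robbins--Monro conditions $\sum_k \alpha_k = \infty$ and $\sum_k \alpha_k^2 < \infty$ (e.g.\ $\alpha_k \propto 1/k$ below the cap), then letting $K\to\infty$ the right-hand side remains bounded, hence $\sum_k \alpha_k\, \mathbb{E}[\|\nabla F[\tau^k]\|_2^2] < \infty$ while $\sum_k \alpha_k = \infty$. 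A contradiction argument then closes the proof: if $\liminf_k \mathbb{E}[\|\nabla F[\tau^k]\|_2^2] =: \delta > 0$, the summand $\alpha_k\, \mathbb{E}[\|\nabla F[\tau^k]\|_2^2]$ is eventually at least $\tfrac{\delta}{2}\alpha_k$, whose partial sums diverge, contradicting summability; therefore $\liminf_k \mathbb{E}[\|\nabla F[\tau^k]\|_2^2] = 0$.

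I do not expect any individual estimate to be the obstacle; the delicate part is bookkeeping on the hypotheses. One must read \eqref{eqn: SGD-cond-2} as a bound on the norm $\|\mathbb{E}_{\xi_k}[\nabla f_{\xi_k}[\tau^k]]\|_2$ rather than as a literal vector inequality, and one must ensure the Lipschitz hypothesis is actually applicable along the trajectory, which requires the iterates $\tau^k$ to stay within fixed upper and lower bounds — most cleanly guaranteed by a projection or clipping step in the algorithm, consistent with how the hypothesis is phrased — together with $F$ bounded below on that set. Finally, the phrase ``properly tuned sequence of step-size'' should be made precise in the statement: the estimates above dictate exactly the constant cap $\alpha_k \le \mu/\big(L(M_V+\mu_G^2)\big)$ together with $\sum_k\alpha_k=\infty$ and $\sum_k\alpha_k^2<\infty$, and I would record that schedule explicitly.
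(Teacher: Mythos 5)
The paper does not prove this theorem at all: it is imported verbatim (``adapted from'' the cited reference), and the surrounding appendix is devoted only to verifying its hypotheses for the specific loss $F[\tau]$. Your argument is the standard descent-lemma proof underlying that reference, and it is correct as written: the conditional-expectation step, the cap $\alpha_k \le \mu/\bigl(L(M_V+\mu_G^2)\bigr)$, the telescoping sum under the Robbins--Monro conditions, and the $\liminf$ contradiction are exactly what is needed. Your bookkeeping remarks are also on point --- in particular, the statement silently uses that $F$ is bounded below (true here since $F$ is a sum of squares), that \eqref{eqn: SGD-cond-1} and \eqref{eqn: SGD-cond-2} should be read with $\nabla f_{\xi_k}[\tau^k]$ inside the expectations (the statement as printed has $\nabla F$ there, which would make them trivial), and that the Lipschitz hypothesis must hold along the actual trajectory, which requires the iterates to remain in the bounded set where the paper's Lipschitz lemma is established.
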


To verify assumption~\eqref{eqn: SGD-cond-1}, we only need to prove for any $i: \{\phi_i, \psi_i\}$, $\nabla f_i[\tau]$ is Lipschitz continuous. Below the sub-index $i$ is dropped for simplicity.
\begin{lemma} Under some boundedness assumptions, at fixed $\tau$ and for small perturbation $\tilde \tau$, the Fr\'echet derivative $\nabla f[\tau]$ (or equivalently, $\frac{\delta L}{\delta\tau}$ in~(4.12)) is Lipschitz, i.e. there exists some $C>0$ such that 
\begin{equation}\label{eqn: df-Lipschitz}\|\nabla f[\tau + \tilde \tau] - \nabla f[\tau]\|_2 \leq C\|\tilde \tau\|_2\,.\end{equation}
\end{lemma}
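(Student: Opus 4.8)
The plan is to reduce the Lipschitz bound~\eqref{eqn: df-Lipschitz} to two stability estimates — one for the forward phonon transport solve and one for the adjoint solve — and then assemble them through the product structure of $\nabla f$. Write the forward solution as $g = g[\tau]$, the solution of the forward transport equation (the unscaled analogue of~(2.15)) with inflow data generated by the source $\phi$, and recall from~(4.12) that the gradient is produced by the adjoint-state method. Concretely, up to the standing normalization $f[\tau] = \tfrac12\big(M[\tau] - d\big)^2$ with $M[\tau] := \l \psi, g[\tau]\r$ the observation functional and $d$ the data, so that $\nabla f[\tau] = (M[\tau]-d)\,\nabla M[\tau]$, where $\nabla M[\tau]$ is the explicit expression in $g[\tau]$, the adjoint state $h[\tau]$ (driven by $\psi$), and $1/\tau^2$ obtained by differentiating the constraint.

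First I would establish the forward stability estimate: if $\tau$ is bounded above and below and $\tilde\tau$ is a small perturbation, so that $\hat\tau := \tau+\tilde\tau$ also stays bounded away from $0$, then $\|g[\hat\tau] - g[\tau]\|_2 \le C\|\tilde\tau\|_2$. The argument is standard: $\delta g := g[\hat\tau]-g[\tau]$ solves the transport equation with collision operator at $\tau$ and zero inflow data, with a source term bilinear in $\tilde\tau$ and $g[\hat\tau]$, since $1/\hat\tau - 1/\tau$ and the difference of the collision-operator coefficients are $O(\tilde\tau)$ on the admissible set. One then invokes the well-posedness of $\mathcal{L}_0$ and of the transport operator — the coercivity and invertibility on $(\text{Ker}\,\mathcal L_0)^\perp$ used in Proposition~2.2, properties (2) and (5) — together with an a priori bound $\|g[\hat\tau]\|_\infty \le C$ uniform over $\{\tau : 0 < \tau_- \le \tau \le \tau_+\}$, to close the estimate. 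The adjoint problem is of the same structure, so the identical argument yields $\|h[\hat\tau]-h[\tau]\|_2 \le C\|\tilde\tau\|_2$, and $g[\tau], h[\tau]$ are uniformly bounded.

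Next I would propagate these to the functionals. Since $\psi$ is fixed and bounded, $|M[\hat\tau]-M[\tau]| \le C\|\tilde\tau\|_2$ and $|M[\tau]-d| \le C$. For $\nabla M$, using that $\tau \mapsto 1/\tau^2$ is Lipschitz on sets bounded away from $0$, together with the forward and adjoint Lipschitz bounds and the uniform bounds on $g,h$, one gets $\|\nabla M[\hat\tau]-\nabla M[\tau]\|_2 \le C\|\tilde\tau\|_2$ and $\|\nabla M[\hat\tau]\|_2 \le C$. Finally, write
\[
\nabla f[\hat\tau] - \nabla f[\tau] = \big(M[\hat\tau]-M[\tau]\big)\,\nabla M[\hat\tau] + (M[\tau]-d)\big(\nabla M[\hat\tau]-\nabla M[\tau]\big),
\]
and bound the two terms by the triangle inequality, which gives~\eqref{eqn: df-Lipschitz} with the perturbation $\tilde\tau$.

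The main obstacle is the forward (and hence adjoint) stability estimate in a norm strong enough to control the remaining nonlinear terms: one needs an $L^\infty_\omega$ (or suitably weighted $L^2$) a priori bound on $g[\tau]$ that is \emph{uniform} over the admissible set of relaxation times, and one must track the $1/\tau$ and $1/\tau^2$ factors carefully — this is precisely why the hypothesis restricts to small perturbations of a fixed, bounded-away-from-zero $\tau$. Everything past that point is bookkeeping with the triangle inequality and the product rule.
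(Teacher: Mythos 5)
Your overall architecture matches the paper's: split $\nabla f[\tau+\tilde\tau]-\nabla f[\tau]$ into a part explicitly linear in $\tilde\tau$ plus parts controlled by the perturbations $\tilde h$ of the forward solution and $\tilde p$ of the adjoint solution, bound the explicit part using the maximum principle $\|h\|_\infty\le C\|\phi\|_\infty$, $\|p\|_\infty\le C\|\psi\|_\infty$, and reduce everything to the stability estimates $\|\tilde h\|_2,\|\tilde p\|_2\le C\|\tilde\tau\|_2$. (The paper does not package the loss as $\tfrac12(M[\tau]-d)^2$; it linearizes the six-term gradient formula~(4.12) directly into $L_1[\tilde\tau]+L_2[\tilde h]+L_3[\tilde p]$, but that is only a difference of bookkeeping.)

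The gap is concentrated in the step you dismiss as standard, and your sketch of it points at the wrong tool. First, the linearized forward problem does \emph{not} have zero inflow data: the physical inflow condition carries a factor $\phi/\tau$, so perturbing $\tau$ perturbs the inflow trace of $\tilde h$ by $O(\tilde\tau)$; in the energy identity this produces the boundary contribution $\langle \mu v\,\phi^2\tilde\tau^2/(h^*\tau^3)\rangle_{\mu^+,\omega}$ at $x=0$, which must be retained (it is harmless, being $O(\|\tilde\tau\|_2^2)$, but your accounting omits it, and the analogous $\tilde\tau$-dependent boundary data is even more explicit for the adjoint perturbation $\tilde p$). Second, invoking the invertibility of $\mathcal{L}_0$ on $(\mathrm{Ker}\,\mathcal{L}_0)^\perp$ is not the right mechanism here: that is the device used for the diffusion limit in Proposition~2.2 and yields no stability in time. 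The collision operator is only negative semi-definite, so the paper instead tests the perturbation equation with $\tau\tilde h/h^*$ and shows by Cauchy--Schwarz that $\langle\mathcal{L}[\tilde h]\,\tilde h/h^*\rangle_{x,\mu,\omega}\le 0$, i.e.\ $\mathcal{L}$ is dissipative in the $1/h^*$-weighted $L^2$ inner product; this gives $\frac{d}{dt}\|\tilde h\|_2^2\le C_1\|\tilde\tau\|_2^2+C_2\|\tilde h\|_2^2$, and Gronwall on $[0,T]$ closes the estimate, with the Lipschitz constant depending on $T$. With those two corrections your outline becomes the paper's proof; the rest of your assembly by the triangle inequality is fine.
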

\begin{proof}
Suppose both $\tau$ and $g^*$ are both lower and upper bounded in the domain of $\omega$ of our concern: 
\[0<c_1<\tau<c_2\,, 0< c_3< g^*<c_4\,,\]
We will prove that the perturbation of gradient $\nabla f[\tau]$ induced by $\tilde \tau$ is bounded. Recall the gradient formula~(4.12):
\begin{align*}
\nabla f[\tau] = & \dfrac{l}{\average{h^*}_\omega^2} \dfrac{h^*}{\tau} \langle h \psi\rangle_{t,x=0, \mu,\omega} + \dfrac{1}{\tau} \average{p \dfrac{\langle h \rangle_{\mu,\omega}}{\langle h^* \rangle_{\omega}}}_{t, x, \mu} - \dfrac{h^*}{\tau} \dfrac{1}{\langle h^* \rangle_{\omega}^2} \langle \langle h \rangle_{\mu,\omega}\langle p \rangle_{\mu,\omega}\rangle_{t, x}\\
& + \frac{1}{g^*}\langle \mathcal{L}[h]p\rangle_{t,x,\mu}
+ \frac{1}{g^*}\langle \phi \mu v p\rangle_{t,x=0,\mu>0} - \frac{1}{\tau^2\langle h^*\rangle_\omega}\langle \phi \psi\rangle_{t,\mu>0}\,,
\end{align*}
from which we can write out the expression of $\nabla f[\tau+\tilde{\tau}] - \nabla f[\tau]$.
Upon linear approximation, the perturbation can be written into
\begin{align*}
\nabla f[\tau+\tilde{\tau}] - \nabla f[\tau] & = L_1[\tilde{\tau}] + L_2[\tilde{h}] + L_3[\tilde{p}]\,,
\end{align*}
where $L_1$ is a linear function of $\tilde{\tau}$ and involves the unperturbed $\{h, p, h^*, \tau, \phi, \psi\}$, similarly for $L_2$ and $L_3$. Here $\tilde{h}$ is the perturbation to the forward solution and solves~(4.13). We will denote the source term in~(4.13) as 
\begin{equation}\label{eqn: source-Sh}
S_h[\tilde \tau](t,x,\mu,\omega):= - \dfrac{\tilde \tau}{\tau}\mathcal{L}[h] +\dfrac{\average{h}_{\mu,\omega}}{\average{h^*}_{\omega}} \left(-\dfrac{\tilde \tau}{\tau} h^* + \dfrac{\average{h^* \tilde{\tau}/\tau}_\omega}{\average{h^*}_\omega}h^*\right)\,,\end{equation}
then $\tilde h$ solves:
\begin{equation}\label{eqn: perturb-2}
\partial_t \tilde{h} + \mu v \partial_x \tilde{h} = \dfrac{1}{\tau}\mathcal{L}[\tilde{h}] +  \dfrac{1}{\tau} S_h[\tilde \tau]\,.
\end{equation}
Likewise, $\tilde{p}$ is the perturbation to the adjoint solution~(4.11) and solves the following system:
\begin{align}\label{eqn: adjoint-perturb}\left\{\begin{array}{ll}
\partial_t \tilde{p} + \mu v \partial_x \tilde{p} = - \dfrac{1}{\tau} \mathcal{L}[\tilde{p}] - \dfrac{1}{\tau} S_p[\tilde \tau]\,, & 0<x<1\\[3mm]
\tilde{p}(t=T,x,\mu,\omega) = 0\,, & \mu<0\\[3mm]
\tilde{p}(t,x=0,\mu,\omega) = \dfrac{\psi}{\mu v}\left(\tilde l\dfrac{h^*}{\average{h^*}_\omega} - \dfrac{\tilde{\tau}}{\tau}h^* l + \dfrac{(h^*)^2 \tilde{\tau}}{\average{h^*}_\omega^2 \tau}l\right) \,, & \mu<0\\[3mm]
\tilde{p}(t,x=1,-\mu,\omega) = \tilde{p}(t,x=1,\mu,\omega)\,, & \mu<0
\end{array}\right.\end{align}

Using the maximal principle proved in~\cite{GambaLiNair} that
\[\|h\|_\infty \leq C\|\phi\|_\infty\,, \|p\|_\infty \leq C \|\psi\|_\infty\,,\]
it is easy to check that $L_1$ is Lipschitz continuous with respect to $\tilde \tau$:
\[\|L_1[\tilde{\tau}]\|_2 \leq C \|\tilde{\tau}\|_2\,.\]
And the constant $C = C(\|\phi\|_{\infty}, \|\psi\|_{\infty}, c_1, c_2, c_3, c_4)$.

Then we will prove Lipschitz continuity of $\tilde h$ with respect to $\tilde \tau$. As $L_2$ is linearly dependent in $\tilde h$, it is then Lipschitz contibuous in $\tilde \tau$. Firstly we note that $S_h$~\eqref{eqn: source-Sh} is Lipschitz with respect to $\tilde \tau$ by Cauchy-Schwatz inequality:
\[\|S_h[\tilde \tau]\|_2 \leq C\|\tilde \tau\|_2 \,,\]
where the constant $C = C(\|\phi\|_\infty, c_1, c_2, c_3, c_4).$ 

In addition, we multiply $L[\tilde h]$ by $\frac{\tilde{h}}{h^*}$ to attain the following inequality: 
\begin{align*}
\average{\mathcal{L}[\tilde h] \frac{\tilde{h}}{h^*}}_{x,\mu,\omega}
& = \frac{1}{\average{h^*}_\omega}\average{\average{\tilde h}_{\mu,\omega}^2}_{x} - \average{\frac{\tilde{h}^2}{h^*}}_{x,\mu,\omega}\,,\\
& \leq \frac{1}{\average{h^*}_\omega} \average{\frac{\tilde{h}^2}{h^*}}_{x,\mu,\omega} \average{h^*}_{x,\mu,\omega} - \average{\frac{\tilde{h}^2}{h^*}}_{x,\mu,\omega} \leq 0\,,
\end{align*}
the last line is because of Cauchy-Schwatz inequality and 
\[\langle 1\rangle_{x,\mu} = \int_0^1 \int_{-1}^1 1 \,dxd\mu = 1\,.\]

Then equation~\eqref{eqn: perturb-2} is multiplied by $\frac{\tau \tilde h}{h^*}$ and taken the average of $\average{\cdot}_{x,\mu,\omega}$ on both sides, by the above inequality we have:
\begin{align*}
\average{\frac{\tau}{h^*} \frac{1}{2} \partial_t \tilde{h}^2}_{x,\mu,\omega} + \average{\frac{\mu v \tau}{h^*} \frac{1}{2} \partial_x \tilde{h}^2}_{x,\mu,\omega} & = \average{\mathcal{L}[\tilde h] \frac{\tilde{h}}{h^*}}_{x,\mu,\omega} + \average{S_h[\tilde \tau] \frac{\tilde{h}}{h^*} }_{x,\mu,\omega}\,,\\
C\partial_t \average{\tilde{h}^2}_{x,\mu,\omega} + \average{\frac{\mu v \tau}{h^*} \partial_x \tilde{h}^2}_{x,\mu,\omega} & \leq 2\average{S_h[\tilde \tau] \frac{\tilde{h}}{h^*}}_{x,\mu,\omega} \,,
\end{align*}
here $C = C(c_1, c_4)>0.$ By integrating in $x$, the second term at LHS can be written out,
\begin{align*}
\average{\frac{\mu v \tau}{h^*} \partial_x \tilde{h}^2}_{x,\mu,\omega} & = \average{\frac{\mu v \tau}{h^*} \tilde{h}^2}_{x=1,\mu,\omega} - \average{\frac{\mu v}{h^*} \frac{\phi^2 }{\tau^3} \tilde{\tau}^2 }_{\mu^+,\omega} - \average{\frac{\mu v \tau}{h^*} \tilde{h}^2 }_{\mu^-,\omega}\,.
\end{align*}
At $x=1$ we incorporate the reflective boundary condition in~(4.13) to get,
\begin{align*}
\average{\frac{\mu v \tau}{h^*} \tilde{h}^2}_{x=1,\mu,\omega} = 0\,,
\end{align*}
and simply realizing that the third term is non-negative, we have
\[\average{\frac{\mu v \tau}{h^*} \partial_x \tilde{h}^2}_{x,\mu,\omega} \geq - \average{\frac{\mu v}{h^*} \frac{\phi^2 }{\tau^3} \tilde{\tau}^2 }_{\mu^+,\omega}\,.\]
Therefore,
\begin{align*}
C \frac{d}{dt} \average{\tilde{h}^2}_{x,\mu,\omega} & \leq \average{\frac{\mu v}{h^*} \frac{\phi^2}{\tau^3} \tilde{\tau}^2}_{\mu^+,\omega} + 2\average{S_h[\tilde \tau] \frac{\tilde{h}}{h^*}}_{x,\mu,\omega}\,,\\
\frac{d}{dt} \|\tilde{h}\|_2^2 & \leq C_1 \|\tilde \tau \|_2^2 + C_2 \|\tilde \tau\|_2 \|\tilde h \|_2\,, \\
\frac{d}{dt} \|\tilde{h}\|_2^2 & \leq C_1 \|\tilde \tau \|_2^2 + C_2 \|\tilde h \|^2_2\,,
\end{align*}
the second inequality is by Cauchy-Schwarz inequality and the third is by Young's inequality. We define 
\[\|f\|_2(t) := \left(\int f^2 \, dxd\mu d\omega\right)^{1/2}\,,\]
in particular, 
\[\|\tilde \tau\|_2 = \left(\int \tilde \tau^2 \,d\omega\right)^{1/2}\,.\]
We are now ready to apply Gronwall's inequality to the ODE above and obtain,
\begin{align*}
\|\tilde h \|_2^2(t) &\leq \|\tilde h \|_2^2(0)e^{\frac{C}{2}t} + C\|\tilde \tau\|_2^2 \left(e^{\frac{C}{2}t} -1 \right)\,,\\
& \leq C \|\tilde \tau\|_2^2\,, & C = C(T) \text{ for all } 0\leq t \leq T\,.
\end{align*}
hence 
\[\|L_2[\tilde h]\|_2 \leq C \|\tilde \tau\|_2\,.\]

Similar argument can be applied to $\tilde p$ and attain: 
\[
\|\tilde p\|_2 \leq C \|\tilde \tau\|_2\,.
\]

Hence we have proved $L_1, L_2, L_3$ are all Lipschitz with respect to $\tilde \tau$, so there exists some constant $C$ depending on the source function $\phi$, test function $\psi$, domain of $\tau$ and final time $T$, such that~\eqref{eqn: df-Lipschitz} holds.

\end{proof}

With regard to first and second moment limits, it is obvious to check that our method satisfies~\eqref{eqn: SGD-cond-1}, \eqref{eqn: SGD-cond-2} with $\mu = \mu_G = 1$. For~\eqref{eqn: SGD-cond-3}, it can be further transferred to two conditions on the angles between $\nabla f_i$ and their amplitudes. 
\begin{lemma} Under the following two conditions, the second moment limit assumption~\eqref{eqn: SGD-cond-3} can be fulfilled:
\begin{itemize}
\item Angle is small: there exists some constant $c_1$ (uniform in $i$), such that for any $i,j$:
\begin{align}\label{eqn: angle cond}
\frac{\langle \nabla f_i, \nabla f_j\rangle_\omega}{\|\nabla f_i\|_2 \,\|\nabla f_j\|_2} \geq c_1\,,     
\end{align}
and the inner product is in $L^2$:
\begin{align*}
\langle \nabla f_i, \nabla f_j\rangle_\omega := \int \nabla f_i \, \nabla f_j\, d\omega\,.
\end{align*}
\item Amplitudes among all gradients are comparable. There exist some constants $\alpha, \beta >0$ (uniform in $i, j$), such that:
\begin{align}\label{eqn: amp cond}
\alpha \|\nabla f_j\|_2 \leq \|\nabla f_i\|_2 \leq \beta \|\nabla f_j\|_2\,.
\end{align}
\end{itemize}
\end{lemma}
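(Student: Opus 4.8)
The plan is to convert the single-sample second moment into a multiple of $\|\nabla F\|_2^2$ and then read off $M=0$ together with an explicit $M_V$. The first step uses only that $\xi_k$ is uniform on $\{1,\dots,N\}$: then $\mathbb{E}_{\xi_k}[\nabla f_{\xi_k}[\tau^k]]=\frac1N\sum_{i=1}^N\nabla f_i[\tau^k]=\frac1N\nabla F[\tau^k]$, so
\[
\mathbb{V}_{\xi_k}[\nabla f_{\xi_k}[\tau^k]]=\frac1N\sum_{i=1}^N\|\nabla f_i[\tau^k]\|_2^2-\frac1{N^2}\|\nabla F[\tau^k]\|_2^2\le\frac1N\sum_{i=1}^N\|\nabla f_i[\tau^k]\|_2^2.
\]
Hence it suffices to show $\sum_i\|\nabla f_i\|_2^2\le C\|\nabla F\|_2^2$ for a constant $C$ uniform in the iterate (the superscript $k$ is dropped below); this gives \eqref{eqn: SGD-cond-3} with $M=0$ and $M_V=C/N$.

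Second, I would bound $\|\nabla F\|_2^2$ from below using the angle hypothesis. Expanding in the $L^2(\d\omega)$ inner product of the lemma, $\|\nabla F\|_2^2=\sum_{i,j}\langle\nabla f_i,\nabla f_j\rangle_\omega$, and \eqref{eqn: angle cond} applied to each term gives $\langle\nabla f_i,\nabla f_j\rangle_\omega\ge c_1\|\nabla f_i\|_2\|\nabla f_j\|_2$, so
\[
\|\nabla F\|_2^2\ge c_1\Big(\sum_{i=1}^N\|\nabla f_i\|_2\Big)^2.
\]
The point of \eqref{eqn: angle cond} is precisely to prevent cancellation in the sum $\sum_i\nabla f_i$: a \emph{positive} uniform lower bound on the pairwise cosines is what forbids the individual gradients from being large while $\nabla F$ is small, which is exactly the obstruction to an estimate of type \eqref{eqn: SGD-cond-3} with $M=0$.

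Third, I would close the loop by comparing $\sum_i\|\nabla f_i\|_2^2$ with $\big(\sum_i\|\nabla f_i\|_2\big)^2$. The elementary bound $\sum_i\|\nabla f_i\|_2^2\le\big(\sum_i\|\nabla f_i\|_2\big)^2$ (all summands nonnegative) already suffices, giving $\mathbb{V}_{\xi_k}[\nabla f_{\xi_k}]\le\frac{1}{Nc_1}\|\nabla F\|_2^2$, i.e. \eqref{eqn: SGD-cond-3} with $M=0$ and $M_V=\frac{1}{Nc_1}$. If one wants a coefficient that does not degrade with the number of data pairs $N$, the amplitude hypothesis \eqref{eqn: amp cond} is used at this step instead: since all $\|\nabla f_j\|_2$ lie within the factors $\alpha,\beta$ of one another, both $\sum_i\|\nabla f_i\|_2^2$ and $\frac1N\big(\sum_i\|\nabla f_i\|_2\big)^2$ are comparable to $N\|\nabla f_1\|_2^2$ up to constants depending only on $\alpha,\beta$, which tightens $M_V$ to an $N$-independent multiple of $1/c_1$.

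The main obstacle is not in the chain of inequalities above, which is just Cauchy--Schwarz together with nonnegativity once \eqref{eqn: angle cond} and \eqref{eqn: amp cond} are granted uniformly in $i,j$ and along the iterates $\tau^k$. The real content — and the delicate part in the paper — is justifying that these two geometric conditions are plausible for the concrete Fr\'echet derivatives $\nabla f_i[\tau]$ coming from (4.12): one has to argue that the source/test pairs $\{\phi_i,\psi_i\}$ can be designed so that the resulting gradients stay mutually well-aligned (cosine bounded away from $0$) and of comparable magnitude as $\tau$ ranges over the admissible set. That is a design question about the experiments rather than an inequality, and it is where the pointwise bounds on $\tau$ and $g^*$ and the maximum principle of \cite{GambaLiNair} would again enter.
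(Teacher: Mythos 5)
Your argument is correct and follows the same basic route as the paper: write the variance as the mean of the $\|\nabla f_i\|_2^2$ minus the squared norm of the mean, expand $\|\nabla F\|_2^2=\sum_{i,j}\langle\nabla f_i,\nabla f_j\rangle_\omega$, and use the cosine lower bound \eqref{eqn: angle cond} to control the cross terms. The one genuine difference is how you close the comparison $\sum_i\|\nabla f_i\|_2^2\lesssim\|\nabla F\|_2^2$: you apply \eqref{eqn: angle cond} to every pair, including $i=j$ (which forces $c_1\le 1$ and is harmless), to get $\|\nabla F\|_2^2\ge c_1\bigl(\sum_i\|\nabla f_i\|_2\bigr)^2$, and then use the elementary bound $\sum_i a_i^2\le\bigl(\sum_i a_i\bigr)^2$ for $a_i\ge 0$. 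This never invokes the amplitude condition \eqref{eqn: amp cond}, so your proof shows that the second hypothesis is actually redundant for this lemma. The paper instead keeps the diagonal terms separate and uses \eqref{eqn: amp cond} to replace $\|\nabla f_i\|_2\,\|\nabla f_j\|_2$ by $\frac{1}{\beta}\|\nabla f_i\|_2^2$ in the off-diagonal sum, arriving at $M=0$ and $M_V+1=1/\min\{1,c_1/\beta\}$; your route is slightly cleaner and also handles the $1/N^2$ normalization of $\mathbb{E}_{\xi_k}[\nabla f_{\xi_k}]=\frac{1}{N}\nabla F$ more carefully than the paper does. Two small remarks: both arguments implicitly require $c_1>0$ (intended by ``angle is small'' but worth stating explicitly), and your closing suggestion that \eqref{eqn: amp cond} is needed to keep $M_V$ from degrading with $N$ is backwards --- your constant $M_V=1/(Nc_1)$ already improves as $N$ grows, so the amplitude hypothesis buys nothing in your version. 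You are also right that the substantive difficulty is whether \eqref{eqn: angle cond} and \eqref{eqn: amp cond} actually hold for the gradients generated by the source/test pairs; the paper concedes this and offers only numerical evidence and a linear-recombination heuristic.
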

\begin{proof}
We first rewrite condition~\eqref{eqn: SGD-cond-3} by definition.
\begin{align*}
\mathbb{V}_{\xi_k}[\nabla f_{\xi_k}] = \mathbb{E}_{\xi_k}[\|\nabla f_{\xi_k}\|_2^2] - \|\mathbb{E}_{\xi_k}(\nabla f_{\xi_k})\|_2^2 = \frac{1}{N}\sum_{i=1}^N \|\nabla f_{i}\|_2^2 - \|\nabla F\|_2^2\,.
\end{align*}
Thus condition~\eqref{eqn: SGD-cond-3} is equivalent to: $\exists M, M_V\geq 0$, such that
\[\frac{1}{N}\sum_{i=1}^N \|\nabla f_i\|_2^2  \leq M + \frac{M_v+1}{N^2}(\sum_{i=1}^N\, \|\nabla f_i\|_2^2 + \sum_{i\not=j} \langle\nabla f_i, \nabla f_j\rangle_\omega)\,.\]
Assuming \eqref{eqn: angle cond} and \eqref{eqn: amp cond},
\begin{align*}
\sum_{i\not=j} \langle \nabla f_i, \nabla f_j\rangle_\omega &\geq c_1 \sum_{i\not=j} \|\nabla f_i\|_2 \|\nabla f_j\|_2\,,\\
& \geq c_1 \sum_{i\not=j} \|\nabla f_i\|_2 \frac{1}{\beta} \|\nabla f_i\|_2 = \frac{c_1}{\beta} (N-1)\sum_i \|\nabla f_i\|_2^2\,.
\end{align*}
Then we consider and denote $c_2 = \min\{1, \frac{c_1}{\beta}\}$,
\begin{align*}
\frac{1}{N^2}(\sum_{i=1}^N \|\nabla l_i\|_2^2 + \sum_{i\not=j} \langle \nabla f_i, \nabla f_j\rangle_\omega) & \geq \frac{1}{N^2} (\sum_{i=1}^N \|\nabla f_i\|_2^2 + \frac{c_1}{\beta} (N-1)\sum_i \|\nabla f_i\|_2^2)\,,\\
& \geq \frac{c_2}{N^2}N(\sum_i \|\nabla f_i\|_2^2)\,,\\
& \, = \frac{c_2}{N} \sum_i \|\nabla f_i\|_2^2\,.
\end{align*}
Let $M=0$ and $M_V + 1 = \frac{1}{c_2}$ then the second moment limit is proved.
\end{proof}
Due to the complicated expression of gradient in our problem, the geometric meaning of angle between two gradient is rather unclear, we leave the detailed discussion of these two conditions for future study but show some numerical demonstration. Intuitively speaking, suppose among all vectors of $\nabla f_i$, the angle between any two vectors are small, which means they are pointing almost to the same direction, the variance should be controlled.

Despite our observation of gradients (Figure~8) that they concentrate in different $\omega$, since non-trivial linear combination will span the same space as the original basis, we can consider some linear combination of the set $\{\nabla f_i\}$ such that they satisfy these two conditions.

We will show an example of finding a linear combination of gradients at a fix $\tau$. For simplicity, we take $\tau$ to be at the initial guess, 
\[\tau^0(\omega) = -0.15 (\omega - 4) + 1.4\,.\]
and at this specific $\tau^0(\omega)$, we can gather the gradients with respect to each $f_i$ (note that it is corresponding to different pair of $\{\phi_i, \psi_i\}$) into a matrix:
\begin{align*}
\nabla F[\tau^0] = \left[\begin{array}{ll}
\nabla f_1[\tau^0]\,, ...\,, \nabla f_{10}[\tau^0] \end{array}
\right]\,.\end{align*}
Our goal is to find a linear combination of $\{\nabla f_i[\tau]\}$ such that the angle between any two vectors is not $0$ (or not too small).


Numerical norm is computed for each individual gradient where the ratio of norms is between $\frac{1}{2}$ and $2$, the norm of this finite set of gradient vectors is obviously comparable.
\begin{figure}[htbp]
    \centering
    \includegraphics[scale =0.16]{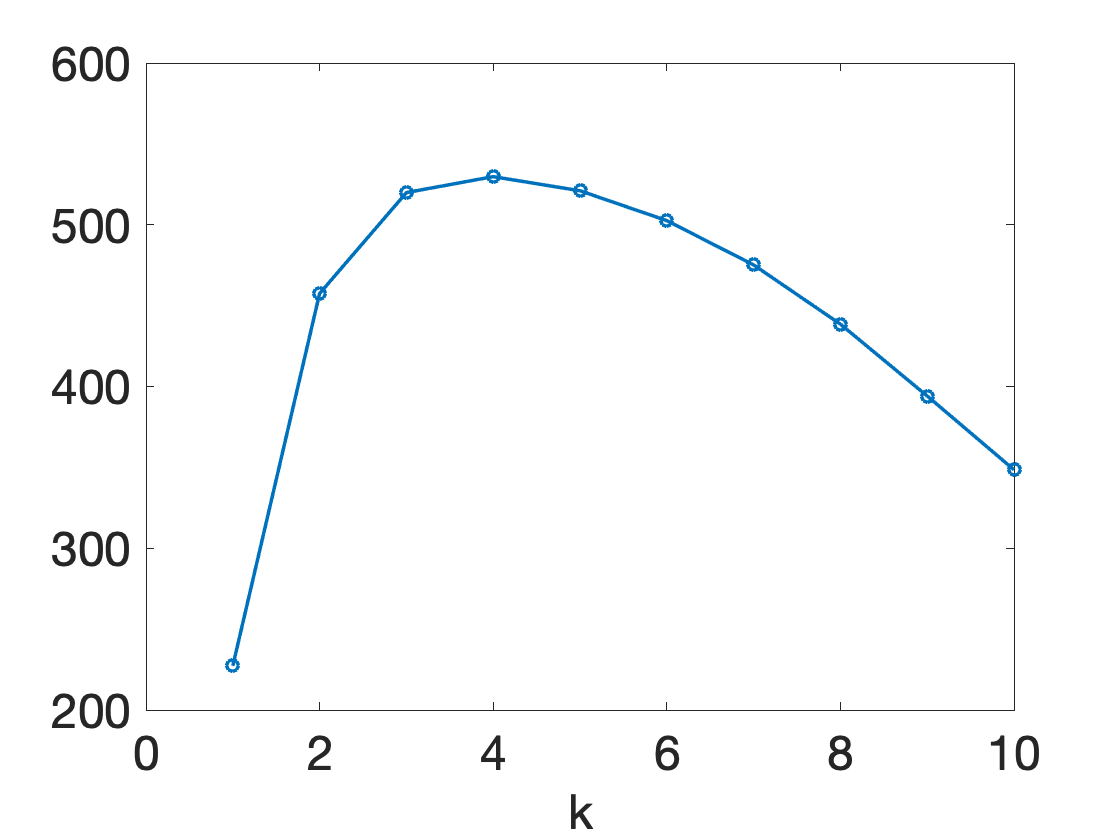}
    \caption{$\|\nabla L_i\|_2\,$ corresponding to Figure~8.}
    \label{fig: show gradient norms}
\end{figure}

And we also compute the angle matrix, whose entry represents the cosine value between any two gradient vectors from $\{\nabla f_i\}$. It is noticeable that the angle between $\nabla f_1, \nabla f_3$ is almost orthogonal that
\[\frac{\average{\nabla f_1, \nabla f_3}}{\|\nabla f_1\|\|\nabla f_3\|} = -0.01\,,\]
and other cosine values are mostly between $0.1$ and less than $0.7$.

Let $A$ be a uniform random matrix, i.e. 
\[A = (a_{ij})\,, a_{ij}\sim \text{Unif}(0,1)\,,\]
and right multiply it to gradient matrix: $\tilde{G} = GA\,,$ we obtain a new set of vectors, as shown in Figure~\ref{fig: show new gradients}. Intuitively, this kind of recombination averages out the update on each specific point $\omega$ and would give a smaller range of angles between each pair of $\nabla f_i$.

\begin{figure}[htbp]
    \centering
    \begin{subfigure}[b]{0.45\textwidth}
         \centering
         \includegraphics[width=\textwidth]{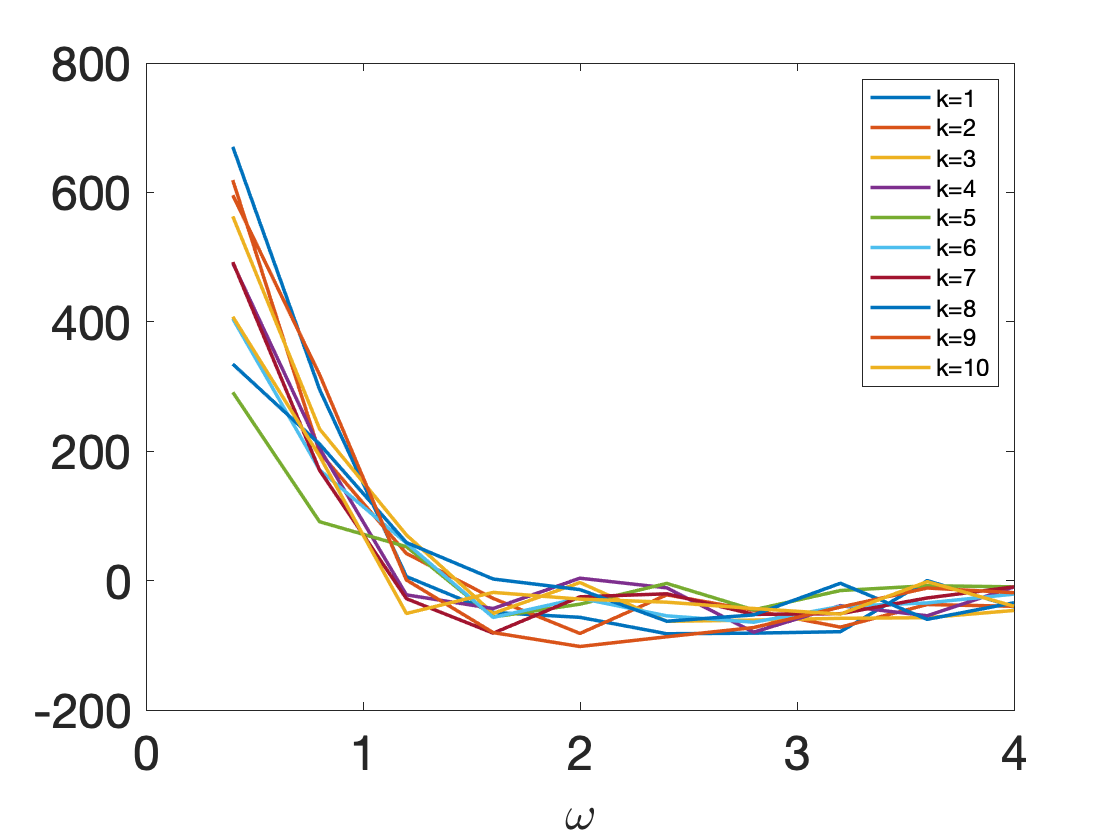}
         \caption{uniform linear combination}
    \end{subfigure}
    \hfill
    \begin{subfigure}[b]{0.45\textwidth}
         \centering
         \includegraphics[width=\textwidth]{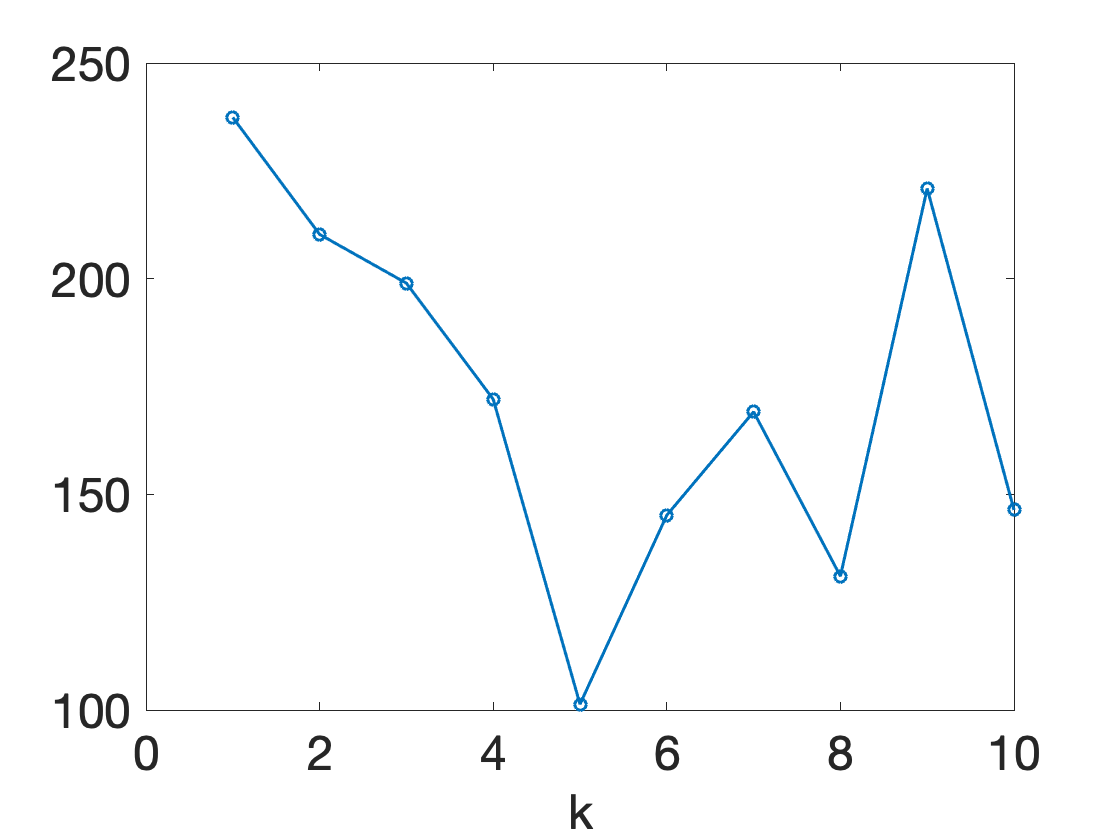}
         \caption{norm}
    \end{subfigure}
    \caption{Recombination.}
    \label{fig: show new gradients}
\end{figure}

We note that the ratio of norms between these $\{\nabla \tilde{f}_i\}$ roughly stays between $0.5$ and $1$, while the angle between any two vectors are now small in the sense that their cosine values are mostly between $0.92$ and $1$.

\bibliographystyle{siamplain}